\documentclass[11pt]{article}

\usepackage[utf8]{inputenc}
\usepackage[T1]{fontenc}
\usepackage{lmodern}
\usepackage[letterpaper,margin=1in]{geometry}
\usepackage{amsmath,amssymb,amsthm}
\usepackage{microtype}
\usepackage{xcolor}
\usepackage{tikz-cd}
\usepackage[colorlinks=true,linkcolor=blue,citecolor=blue,urlcolor=blue]{hyperref}

\newtheorem{theorem}{Theorem}[section]
\newtheorem{lemma}[theorem]{Lemma}

\newtheorem{corollary}[theorem]{Corollary}

\theoremstyle{definition}
\newtheorem{definition}[theorem]{Definition}
\newtheorem{remark}[theorem]{Remark}

\makeatletter
\renewcommand*\@fnsymbol[1]{\ensuremath{%
  \ifcase#1\or
    *\or
    \star\or     
    \ddagger\or
    \mathsection\or
    \mathparagraph\or
    \|\or
    **\or
    \star\star\or
    \ddagger\ddagger
  \else
    \@ctrerr
  \fi}}
\makeatother

\title{A Colorful Extension of VC-dimension and Geometric Applications}
\author{Chaya Keller\thanks{School of Computer Science, Ariel University, Israel. \texttt{chayak@ariel.ac.il}.}
	\mbox{ }
	and Shakhar Smorodinsky\thanks{Department of Computer Science, Ben-Gurion University of the NEGEV, Be’er Sheva 84105, Israel. 
		\texttt{shakhar@bgu.ac.il}
}}
\date{}

\begin{document}

\maketitle

\begin{abstract}
The VC-dimension is a fundamental measure of the complexity of a set system. In this paper, we introduce and study a colorful variant of VC-dimension that captures the behavior of set systems on colored ground sets.

By studying this new notion, we obtain a variety of geometric results. First, we prove that separable abstract convexity spaces with Radon number $D$ admit a Tverberg theorem with Tverberg number $O(D^2 r \log r)$. This bound significantly improves the $O(Dr^2\log r)$ bound of Alon and Smorodinsky from SODA'26 and is the first quasi-linear bound in $r$, in which the dependence on $D$ is not super-exponential. Second, we prove the first 
colorful $k$-wise Tverberg theorem for separable abstract convexity spaces. Using this theorem, we obtain a colorful selection lemma with $O(D^3)$ colors, an uncolored selection lemma for subsets of size $O(D^3)$, a weak $\varepsilon$-net theorem with nets of size $O_D(\varepsilon^{-O(D^3)})$, and a $(p,q)$-theorem with exponent of $\mathrm{poly}(D)$. All these quantitative bounds are significantly better than the best previously known general bounds for abstract convexity spaces. Finally, we extend our method to obtain a colorful Tverberg theorem for unions of convex sets, generalizing the uncolored theorem of Alon and Smorodinsky (SODA'26). 
\end{abstract}

\section{Introduction}

The VC-dimension is a fundamental measure of the complexity of a set system, with applications in combinatorics, discrete geometry, sampling, and learning theory. In its classical form, the VC-dimension is inherently uncolored: it measures which subsets of a ground set can be realized as traces of hyperedges, without taking into account additional structure such as colors, groups, etc.
Various natural combinatorial and geometric settings do carry such extra structure, and the relevant configurations are often rainbow configurations rather than arbitrary subsets. This raises a basic question: what is the right VC-type notion for colored set systems, and what structural consequences follow from bounded ordinary VC-dimension in such settings?

In this paper we develop a colorful VC framework for this purpose. We introduce $k$-wise colorful and rainbow shattering parameters for hypergraphs and prove a general upper bound for these parameters in terms of the ordinary VC-dimension. This can be viewed as a colorful analogue of the Perles--Sauer--Shelah lemma: bounded VC-dimension limits the number of rainbow assignments that can be forced across color classes. 

We apply this framework to separable abstract convexity spaces. An \emph{abstract convexity space} is a pair $(X,\mathcal C)$, where $X$ is a nonempty set and $\mathcal C$ is a family of subsets of $X$ such that $\emptyset,X\in\mathcal C$ and the family $\mathcal C$ is closed under intersections and under nested unions: if $\mathcal A\subseteq\mathcal C$ is nonempty and totally ordered by inclusion, then $\bigcup_{A\in\mathcal A}A\in\mathcal C$.\footnote{Closure under nested unions is included in the definition used by Holmsen and Pat\'akov\'a~\cite{HolmsenP24}. In the present paper it is needed only in order to apply their fractional Helly theorem, and hence only for Theorems~\ref{thm:selection_lemma}--\ref{thm:pq-separable}; the results through Theorem~\ref{thm:improved-tverberg} use only closure under intersections.} The sets in $\mathcal C$ are called \emph{convex sets}. For a subset $A\subseteq X$, its convex hull is defined by $\operatorname{conv}(A)=\bigcap\{C\in\mathcal C:A\subseteq C\}$. Thus, $\operatorname{conv}(A)$ is the smallest convex set containing $A$.

A subset $H\subseteq X$ is called a \emph{halfspace} if $H\in\mathcal C$ and $X\setminus H\in\mathcal C$. We say that the convexity space $(X,\mathcal C)$ is \emph{separable} if 
for every two disjoint convex sets $A,B\in\mathcal C$, there is a halfspace $H$ such that $A\subseteq H$ and $B\subseteq X\setminus H$, and for every convex set $C\in\mathcal C$ and every point $x\notin C$, there is a halfspace $H$ such that $C\subseteq H$ and $x\notin H$.\footnote{The first requirement is the separability notion used by Alon and Smorodinsky~\cite{AlonS26}, while the second is the separability notion used by Holmsen and Pat\'akov\'a~\cite{HolmsenP24}. The results through Theorem~\ref{thm:improved-tverberg} require only the first separation property. Theorems~\ref{thm:selection_lemma}--\ref{thm:pq-separable} also use the second property through the fractional Helly theorem of Holmsen and Pat\'akov\'a. For simplicity, throughout the paper we use the term ``separable'' for spaces satisfying both requirements.}

For $r\ge 2$, the \emph{$r$-th Tverberg number} of $(X,\mathcal C)$, denoted by $\operatorname{Tv}_{\mathcal C}(r)$, is the smallest integer $N$, if it exists, such that every set $P\subseteq X$ of size $N$ admits a partition $P=P_1\dot\cup\cdots\dot\cup P_r$ with $\bigcap_{i=1}^r\operatorname{conv}(P_i)\neq\emptyset$. The case $r=2$ is the Radon number of the space. Throughout the paper we denote by $D=\operatorname{Tv}_{\mathcal C}(2)$ the \emph{Radon number} of the space, and by $r$ the number of parts in a Tverberg partition.

\subsection{Background}

\paragraph{VC dimension and shattering.}
Let $H=(V,E)$ be a hypergraph. In what follows, we sometimes refer to elements of $E$ as ranges or hyperedges. For a set $T\subseteq V$ and a hyperedge $e\in E$, the set $T\cap e$ is called the {\em trace} of $e$ on $T$. A set $S\subseteq V$ is said to be {\em shattered} by $H$ if every subset of $S$ occurs as a trace, that is, if $\{S\cap e:e\in E\}=2^S$. The {\em VC dimension} of $H$ is the supremum on the sizes of shattered sets, 
\(
\operatorname{VCdim}(H)=\sup \left\{|S|:S\subseteq V \text{ is finite and } \{S\cap e:e\in E\}=2^S \right\}.
\)
 We shall use the Perles--Sauer--Shelah lemma~\cite{Sau72,Shelah72}: if $\operatorname{VCdim}(H)\le v$, then for every finite set $T\subseteq V$ with $|T|=m$, one has $\pi_H(T):=|\{T\cap e:e\in E\}|\le \sum_{i=0}^v {m\choose i}=O(m^v)$. The function $\pi_H(m)=\mbox{max} \{\pi_H(T):|T|=m,T \subset V\}$ is called the {\em shattering function} of $H$.

In SODA'26, Alon and Smorodinsky~\cite{AlonS26} proposed a generalization of shattering which they coined `$r$-shattering'. For $r \geq 2$, $S \subset V$ is $r$-shattered by $H$ if for every partition $S=S_1 \cup \ldots \cup S_r$, there exist hyperedges $e_1,\ldots,e_r \in E$ such that $(\cap_{i=1}^r e_i) \cap S = \emptyset$, and for each $i$, $S_i \subset e_i$. The $r$-VC dimension of $H$ is the maximum size of a set that is $r$-shattered by $H$. Note that for $r=2$, this notion reduces to the standard notion of shattering. 
They used the Perles--Sauer--Shelah lemma to 
show that if $\mathrm{VCdim}(H) \leq v$ then the $r$-VC dimension of $H$ is $O(vr^2 \log r)$, and applied this bound to obtain a Tverberg theorem for unions of convex sets with a near-optimal bound on the Tverberg number. In a recent follow-up paper~\cite{CGSWX25}, Chen et al.~provided near matching lower bounds and further explored the applications of the technique of~\cite{AlonS26} to Tverberg-type results for unions of convex sets.

The new notion of $r$-shattering presented in~\cite{AlonS26} lies in a stark difference with the rich theory of generalizations of the VC dimension, 
such as the Natarajan~\cite{Natarajan89}, the DS~\cite{DanielyS14}, and the Steele~\cite{Steele78} dimensions.
Indeed, as the previously studied generalizations were motivated by questions from machine learning  (e.g.,~\cite{BrukhimCDMY22,DanielyS14,Natarajan89}), mathematical logic (e.g.,~\cite{ChernikovPT19}), etc., 
they focus on shattering by \emph{a single hyperedge}, where the `shattered domain' is generalized from $\{0,1\}^S$ to $\{0,1,\ldots,m\}^S$ 
and the intersections $e \cap S$ are required to cover some portion of $\{0,1,\ldots,m\}^S$, such as a `copy' of $\{0,1\}^S$ for the Natarajan dimension. In contrast, the geometric-oriented definition of~\cite{AlonS26} leaves the shattered domain unchanged and considers the patterns in which $r$-tuples of hyperedges intersect it simultaneously.

The method of Alon and Smorodinsky~\cite{AlonS26} shows that VC-dimension bounds can be used not only to count traces, but also to control more structured shattering phenomena that arise in Radon and Tverberg-type problems. Our contribution is to refine this approach by introducing colorful and $k$-wise rainbow shattering notions. These variants are more flexible than the original notion in~\cite{AlonS26}: they lead to a broad range of applications, including new colorful Tverberg-type theorems and improved quantitative bounds for Tverberg theorems, weak epsilon-nets, and $(p,q)$-theorems in separable convexity spaces.

\paragraph{Radon and Tverberg numbers in abstract convexity spaces.}
Radon's theorem states that every set of $d+2$ points in $\mathbb R^d$ can be partitioned into two parts whose convex hulls intersect. Tverberg's classical theorem extends this from two parts to $r$ parts: every set of $(d+1)(r-1)+1$ points in $\mathbb R^d$ admits a partition into $r$ parts whose convex hulls have a common point. 
In terms of the Tverberg number $\operatorname{Tv}_{\mathcal C}(r)$ defined above, Radon's theorem says that $\operatorname{Tv}_{\mathbb{R}^d}(2) \leq d+2$, while Tverberg's theorem says that $\operatorname{Tv}_{\mathbb R^d}(r)\le (d+1)(r-1)+1$. A central problem in abstract convexity is to understand to what extent such Tverberg-type theorems are forced by Radon-type theorems.

 In its strongest form, this question was formulated as Eckhoff's partition conjecture~\cite{Eckhoff00}, which states that if the Radon number of an abstract convexity space is $D$, then $\operatorname{Tv}_{\mathcal C}(r)=(D-1)(r-1)+1$ (the same bound as in the Euclidean case, in which $D=d+2$). This conjecture was disproved by Bukh~\cite{Bukh10}. However, the known counterexamples and their natural generalizations miss the conjectured value only by an additive term of order $O(r)$; see Holmsen~\cite{Holmsen24}. This motivates the following weaker form, known as the \emph{weak Eckhoff conjecture}: $\operatorname{Tv}_{\mathcal C}(r)=O(Dr)$. The weak Eckhoff conjecture remains a central open problem in abstract convexity; see, for instance,~\cite{BK22,BaranyS18,Holmsen24,Palvolgyi22}.

The best general upper bound currently known is due to P\'alv\"olgyi.

\begin{theorem}[P\'alv\"olgyi~\cite{Palvolgyi22}]
Let $(X,\mathcal C)$ be an abstract convexity space with Radon number $D$. Then  $\operatorname{Tv}_{\mathcal C}(r)\le O\!\left(D^{D^{D^{\log D}}}r\right)$.
\end{theorem}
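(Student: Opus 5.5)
This is P\'alv\"olgyi's theorem, quoted from~\cite{Palvolgyi22}, so the plan below reconstructs the route I would expect rather than the argument actually given there. It has two stages. First, turn the Radon number $D$ into a \emph{fractional Helly} statement with a Helly-type number $k=k(D)$ that depends only on $D$. Second, turn that fractional Helly property into a Tverberg bound that is \emph{linear} in $r$, with the constant depending only on $k$. The tower-type dependence on $D$ should enter only through the first stage. There I would take as given the chain Radon $\Rightarrow$ colorful/fractional Helly (Holmsen--Lee), whose known quantitative form is of order $D^{D^{\log D}}$. I would treat it as a black box: there are $k$ and a function $\beta=\beta_k(\alpha)>0$ such that, for every finite family of convex sets in which an $\alpha$-fraction of the $k$-tuples intersect, some point lies in a $\beta$-fraction of the sets.

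For the second stage, fix a point set $P$ of size $N=Cr$, where $C$ will depend only on $k$ and $D$. Consider the family $\mathcal F=\{\operatorname{conv}(P\setminus Q): Q\subseteq P,\ |Q|=t\}$ with $t=\lfloor (N-1)/k\rfloor$. Any $k$ members of $\mathcal F$ intersect, because their intersection contains $\operatorname{conv}$ of the nonempty set $P\setminus(Q_1\cup\dots\cup Q_k)$. Fractional Helly therefore gives a point $x$ that lies in $\operatorname{conv}(P\setminus Q)$ for at least a $\beta$-fraction of all $t$-subsets $Q$. Since a uniformly random $t$-set hits a fixed small set with probability bounded below, this means $x$ is \emph{deep}: for every set $Q$ of size about $\gamma N$, with $\gamma=\gamma(k)>0$, we still have $x\in\operatorname{conv}(P\setminus Q)$. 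A monotonicity argument converts ``most $Q$'' into ``every $Q$ of slightly smaller size''; at worst I would restrict to a large subfamily of $P$ and re-apply the fractional Helly theorem.

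Next, extract the Tverberg parts greedily. The Radon number bounds the Carath\'eodory number (by $D-1$). Hence whenever $x\in\operatorname{conv}(R)$, there is a subset $R'\subseteq R$ with $|R'|\le D$ and $x\in\operatorname{conv}(R')$. Starting from $R=P$, repeatedly pick such a small subset $R'$ with $x\in\operatorname{conv}(R')$, declare it a part, and remove it. After $j$ steps we have removed at most $jD$ points. By depth, $x$ stays in the hull of the remainder as long as $jD\le\gamma N$. This yields $r$ parts once $N\ge rD/\gamma$. Leftover points can be added to any part, which keeps $x$ in every hull. This gives $\operatorname{Tv}_{\mathcal C}(r)\le (D/\gamma(k))\,r$.

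The main obstacle is the depth step. Fractional Helly only gives $x$ in a $\beta$-fraction of the hulls, not in all of them, and passing from ``many removal sets $Q$ are harmless'' to ``all removal sets of linear size are harmless'' is not automatic, since the bad sets $Q$ need not be upward closed in any useful way. I would first try a $(p,q)$/weak-$\varepsilon$-net-style bootstrapping. Apply fractional Helly repeatedly to obtain a bounded number $h(k)$ of points piercing every $\operatorname{conv}(P\setminus Q)$ with $|Q|\le\gamma N$. Then choose among these $h$ points by pigeonhole while building the parts greedily. This costs only a factor $h(k)$ in $C$, so the bound stays linear in $r$.
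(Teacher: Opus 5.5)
\textbf{The step that fails.} The gap is the claim that ``the Radon number bounds the Carath\'eodory number (by $D-1$)''. In a general abstract convexity space the Carath\'eodory number is not bounded by any function of $D$.

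Here is an example. Let $X=\{p_1,\dots,p_n\}\cup\{w,z\}$, put $F=\{p_1,\dots,p_n\}$ with $n\ge 3$, and let $\mathcal C$ consist of all $S\subseteq X$ such that $z\in S$ whenever $|S\setminus\{z\}|\ge 2$, and $w\in S$ whenever $F\subseteq S$.
\begin{itemize}
\item This family contains $\emptyset$ and $X$ and is closed under intersections and nested unions.
\item Any four points have a Radon partition: either two pairs whose hulls both contain $z$, or $\{z\}$ against the other three. So $D\le 4$.
\item Yet $w\in\operatorname{conv}(F)$, while $\operatorname{conv}(F\setminus\{p_i\})=(F\setminus\{p_i\})\cup\{z\}\not\ni w$. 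So no proper subset of $F$ has $w$ in its hull.
\end{itemize}
Hence your greedy extraction of parts of size at most $D$ around $x$ has no justification, and you offer no substitute.

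You have also misplaced the difficulty: depth is free. By Levi's theorem the Helly number is at most $D-1$. When $(D-1)t<N$, every $D-1$ members of the finite family $\{\operatorname{conv}(P\setminus Q):|Q|=t\}$ meet, so all of them do. No fractional Helly or $(p,q)$ bootstrapping is needed. So if your Carath\'eodory step were valid, your argument would give $\operatorname{Tv}_{\mathcal C}(r)=O(D^2r)$ in every convexity space without using Holmsen--Lee at all. That is the weak Eckhoff conjecture up to a factor $D$, far beyond anything known. This alone signals that the real content sits in the step you took for granted.

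\textbf{A route that works.} The paper only quotes this theorem, but here is what fractional Helly is actually needed for: manufacturing many \emph{small} sets whose hulls share a point, in place of a Carath\'eodory bound.
\begin{itemize}
\item Let $m=m(D)$ be the Holmsen--Lee fractional Helly number and $s=\operatorname{Tv}_{\mathcal C}(m)$. This is bounded in terms of $D$, e.g.\ $s\le D^{\lceil\log_2 m\rceil}$ by the product inequality for Tverberg numbers.
\item Every $s$-subset of $P$ splits into $m$ parts with intersecting hulls. Enlarging each part to an $s$-subset of $P$ in all possible ways and counting shows that a fraction $\alpha(m,s)>0$ of the $m$-tuples of $\{\operatorname{conv}(A):A\in\binom{P}{s}\}$ intersect.
\item Fractional Helly then gives a point $x$ with $x\in\operatorname{conv}(A)$ for at least $\lambda\binom{n}{s}$ sets $A\in\binom{P}{s}$.
\item Each point of $P$ lies in at most $\binom{n-1}{s-1}$ of these sets. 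Hence a maximal pairwise disjoint subfamily has at least $\lambda n/s^2$ members.
\item So $n\ge s^2r/\lambda$ (up to an additive constant) yields $r$ disjoint parts whose hulls all contain $x$. Leftover points can go into any part.
\end{itemize}
This selection-plus-matching mechanism is the same one the paper uses for Theorem~\ref{thm:large-color-classes-colored-tverberg}. The dependence on $D$ enters through $m$, $s$ and $\lambda$, not through a depth or Carath\'eodory parameter.
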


Thus, P\'alv\"olgyi's theorem proves the qualitative content of the weak Eckhoff conjecture, namely linear growth in $r$, but with a very large dependence on the Radon number $D$.
For separable convexity spaces, Alon and Smorodinsky obtained a different bound, using their new notion of $r$-shattering described above and a Radon-to-Tverberg leveraging argument based on VC-dimension considerations.

\begin{theorem}[Alon--Smorodinsky~\cite{AlonS26}]
Let $(X,\mathcal C)$ be a separable abstract convexity space with Radon number $D$. Then $\operatorname{Tv}_{\mathcal C}(r)\le O\!\left(Dr^2\log r\right)$.
\end{theorem}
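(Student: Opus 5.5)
The plan is to follow the Alon--Smorodinsky leveraging scheme. We take a set $P\subseteq X$ of size $N=C\,Dr^2\log r$ and suppose, for contradiction, that no partition of $P$ into $r$ parts has intersecting convex hulls. We then build a hypergraph on ground set $P$ whose ranges are the halfspaces of $(X,\mathcal C)$ restricted to $P$. The first step is to bound the ordinary VC-dimension of this hypergraph by $D-1$ (or $O(D)$). If a set $S$ of size $D$ were shattered by halfspaces, then Radon's property would give a partition $S=A\cup B$ with $\operatorname{conv}(A)\cap\operatorname{conv}(B)\neq\emptyset$. But some halfspace $H$ has trace exactly $A$ on $S$. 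Since $H$ and $X\setminus H$ are convex, $\operatorname{conv}(A)\subseteq H$ and $\operatorname{conv}(B)\subseteq X\setminus H$, a contradiction.

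The second step is to show that $P$ is $r$-shattered by this hypergraph. Fix any partition $P=P_1\cup\cdots\cup P_r$. By assumption $\bigcap_i\operatorname{conv}(P_i)=\emptyset$. We want halfspaces $H_i\supseteq P_i$ with $\bigcap_i H_i\cap P=\emptyset$. We build them inductively. Set $K_1=\operatorname{conv}(P_1)$. At each stage we separate the current convex set from the intersection of the remaining hulls: first separate $\operatorname{conv}(P_1)$ from $\bigcap_{i\ge2}\operatorname{conv}(P_i)$ by a halfspace $H_1$. Then replace $\operatorname{conv}(P_2)$ by $\operatorname{conv}(P_2)\cap H_1$ and separate that from $\bigcap_{i\ge3}\operatorname{conv}(P_i)$, and so on. The disjointness needed at each step follows from $\bigcap\operatorname{conv}(P_i)=\emptyset$, and intersections of convex sets remain convex. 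The resulting halfspaces satisfy $P_i\subseteq H_i$ and $\bigcap H_i=\emptyset$. This uses only the first separation property.

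The third step invokes the Alon--Smorodinsky bound, that VC-dimension $v$ implies $r$-VC dimension $O(vr^2\log r)$. With $v<D$, a set of size $N=C D r^2\log r$ (for a suitable constant $C$) cannot be $r$-shattered, which is the desired contradiction.

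The main obstacle is this $r$-VC-dimension bound itself, in case it must be reproved rather than cited. My plan for it is a counting argument. There are about $r^N/r!$ partitions of $P$, and each must be witnessed by an $r$-tuple of traces. By Perles--Sauer--Shelah there are at most $O(N^{v})^r$ such tuples. A fixed tuple $(e_1,\dots,e_r)$ with empty common trace can witness only partitions in which each point $p$ goes to a part $i$ with $p\in e_i$. Since $p$ is missing from at least one $e_i$, it has at most $r-1$ allowed parts, so a tuple witnesses at most $(r-1)^N$ partitions. Comparing gives
\[
r^N/r!\le N^{O(vr)}(r-1)^N,\qquad\text{i.e.}\qquad N/r\lesssim vr\log N+r\log r .
\]
Solving this yields $N=O(vr^2\log r)$, which closes the argument.
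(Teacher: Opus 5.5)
Your overall plan (halfspace VC-dimension at most $D-1$, certifying every partition of a Tverberg-free set by halfspaces, then a Perles--Sauer--Shelah count bounding the $r$-VC-dimension) is the Alon--Smorodinsky scheme that the paper cites and describes but does not reprove. However, your second step fails as written.

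You intersect $\operatorname{conv}(P_2)$ with $H_1$ and then require $H_2$ to be disjoint from $\bigcap_{i\ge 3}\operatorname{conv}(P_i)$. Such an $H_2$ is only guaranteed to contain $P_2\cap H_1$, not $P_2$. In fact the requirement $P_2\subseteq H_2$ is impossible whenever $P_2$ meets $\bigcap_{i\ge 3}\operatorname{conv}(P_i)$, whatever $H_1$ is. For instance, in $\mathbb R^2$ with $r=3$, take $P_1=\{(0,5)\}$, $P_2=\{(-1,0),(1,0)\}$ and $P_3=\{(-1,1),(-1,-1)\}$. The three hulls have empty common intersection, yet $(-1,0)\in P_2\cap\operatorname{conv}(P_3)$.

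The halfspaces already built must go on the \emph{other} side. At stage $j$, separate the full hull $\operatorname{conv}(P_j)$ from the convex set $H_1\cap\cdots\cap H_{j-1}\cap\bigcap_{i>j}\operatorname{conv}(P_i)$. These are disjoint by the invariant $H_1\cap\cdots\cap H_{j-1}\cap\bigcap_{i\ge j}\operatorname{conv}(P_i)=\emptyset$. This yields $P_j\subseteq H_j$ for every $j$ and $H_1\cap\cdots\cap H_r=\emptyset$. It is exactly the replacement argument in the paper's proof of Theorem~\ref{thm:colorful-tverberg} and of Lemma~\ref{lem:AS-sconvex-polytopal-enlargement}.

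Second, your final solving step does not give the stated bound. From $N/r\lesssim vr\log N+r\log r$ you only obtain $N=O(vr^2\log(vr))=O(Dr^2\log(Dr))$. This is weaker than $O(Dr^2\log r)$ when $D\gg r$; for $r=2$ it gives $O(D\log D)$ instead of $O(D)$. The loss comes from bounding the number of traces by $N^{O(v)}$.

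Use instead the sharp form $\pi_H(N)\le\sum_{i\le v}\binom{N}{i}\le (eN/v)^v$. Count all $r^N$ ordered partitions; those with empty parts are certified too, since $\operatorname{conv}(\emptyset)=\emptyset$. You get $r^N\le (eN/v)^{vr}(r-1)^N$. Using $\log\frac{r}{r-1}\ge\frac1r$, this gives $N/v\le r^2\log(eN/v)$, hence $N/v=O(r^2\log r)$, i.e.\ $N=O(Dr^2\log r)$ as claimed.
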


This bound has a much better dependence on $D$, but it is quadratic in the number of parts $r$. 

\paragraph{Relaxed and colored Tverberg theorems.}
A related line of work studies relaxed Tverberg-type partitions. Reay~\cite{Reay79} considered the following relaxation in the Euclidean setting. If $T(d,r,k)$ denotes the smallest integer $n$ that guarantees a partition of any point set in $\mathbb R^d$ of cardinality $n$ into $r$ parts such that the convex hulls of every $k$ parts have nonempty intersection, then the case $k=r$ is the ordinary Tverberg number. Reay conjectured that this relaxation does not reduce the required number of points, namely that $T(d,r,k)=T(d,r,r)$ for all $2\le k\le r$; see also \cite{PerlesSigron16}. This motivates the study of $k$-wise Tverberg-type theorems also in abstract convexity spaces. 

Colored versions of Tverberg's theorem have been studied extensively since the work of B\'ar\'any and Larman~\cite{BaranyLarman92}. In the Euclidean setting, the classical colored Tverberg problem asks whether, given $d+1$ color classes in $\mathbb R^d$, each of size $r$, one can partition the points into $r$ pairwise disjoint rainbow sets whose convex hulls have a common point. The planar case was proved by B\'ar\'any and Larman, who conjectured that the answer is positive in all dimensions. Further results and optimal or near-optimal bounds were obtained by topological methods, in particular by \v{Z}ivaljevi\'c and Vre\'cica~\cite{ZivaljevicVrecica92} and by Blagojevi\'c, Matschke and Ziegler~\cite{BlagojevicMatschkeZiegler15}; see also Sober\'on~\cite{Soberon15} for related variants with equal coefficients and tolerance. 

\paragraph{Tverberg-type theorems for unions of convex sets.}
Another motivation for the shattering approach comes from Tverberg-type problems for unions of convex sets. A set in $\mathbb R^d$ is called $s$-convex if it is the union of at most $s$ convex sets. In the 1970s, Kalai asked whether Radon's theorem admits an analogue in which convex sets are replaced by unions of a bounded number of convex sets. More precisely, one asks for the smallest integer $f(d,s)$ such that any set $P\subseteq\mathbb R^d$ of size $f(d,s)$ admits a partition $P=A\dot\cup B$ with the property that any $s$-convex set containing $A$ intersects any $s$-convex set containing $B$; see \cite[Problem 6.6]{BK22} and \cite[Problem 14]{Kalai17}. Alon and Smorodinsky~\cite{AlonS26} recently settled this problem, and further used their extended VC-dimension method to prove Tverberg-type analogues for unions of convex sets. 

\paragraph{Selection lemmas, fractional Helly theorems, and weak $\varepsilon$-nets.}
Selection lemmas form another classical consequence of Tverberg-type phenomena\footnote{Selection lemmas are also closely related to high-dimensional expanders, see \cite{Gro10}.}. The classical selection lemma in $\mathbb R^d$, due to Bárány \cite{Barany82}, states that for every finite set $P\subset\mathbb R^d$ of size $n$, there exists a point $x\in\mathbb R^d$ contained in the convex hulls of at least a positive fraction, depending only on $d$, of all $(d+1)$-element subsets of $P$.
Alon, Kalai, Matou\v{s}ek and Meshulam~\cite{AKMM02} developed a general abstract framework in which Tverberg-type statements, selection lemmas, weak $\varepsilon$-nets and $(p,q)$-theorems are derived from fractional Helly-type assumptions.

Helly's theorem states that for a finite family of convex sets in $\mathbb R^d$, if every $d+1$ members have a nonempty intersection, then the whole family has a nonempty intersection. More generally, the Helly number of a convexity space is the smallest integer $h$, if it exists, such that for every finite family $\mathcal F$ of convex sets, if every $h$ members of $\mathcal F$ have a nonempty intersection, then $\bigcap_{F\in\mathcal F}F\neq\emptyset$. A fractional Helly theorem is a quantitative strengthening of Helly's theorem. We say that a convexity space has fractional Helly number at most $h$ if there is a function $\beta:(0,1]\to(0,1]$ such that, for every $0<\alpha<1$ and every finite family $\mathcal F$ of convex sets, if at least an $\alpha$-fraction of the $h$-tuples of members of $\mathcal F$ have a nonempty intersection, then some point of $X$ belongs to at least $\beta(\alpha)|\mathcal F|$ members of $\mathcal F$. We call such a function $\beta$ a fractional Helly function for the space. When we want to emphasize the parameter $d=h-1$, we write this function as $\beta_d$.

We shall use the fractional Helly theorem of Holmsen and Pat\'akov\'a~\cite{HolmsenP24}: in a separable convexity space, the fractional Helly number is at most the dual VC-dimension of the corresponding halfspace system, plus one. Here the halfspace system is the set system $\mathcal H$ on the ground set $X$ whose ranges are all halfspaces of $(X,\mathcal C)$. Its dual $\mathcal H^*$ is the set system on the ground set $\mathcal H$ in which each point $x\in X$ defines the set $\{H\in\mathcal H:x\in H\}$ of halfspaces containing it. Since, in a separable convexity space with Radon number $D$, the halfspace system has VC-dimension $D-1$ (see Lemma~\ref{lem:vc-radon} below), Assouad's inequality~\cite{Assouad83}, $\operatorname{VCdim}(\mathcal H^*)<2^{\operatorname{VCdim}(\mathcal H)+1}$, shows that its dual VC-dimension is less than $2^D$. Thus, throughout the quantitative consequences below, we use $d=2^D$ and $h=d+1$ as an upper bound on the fractional Helly number.

Let $(X,\mathcal{C})$ be a convexity space, and let $P\subseteq X$ be a finite set. For $0<\varepsilon\le 1$, a set $N\subseteq X$ is called a \emph{weak $\varepsilon$-net} for $P$ if for every $C \in \mathcal{C}$ with $|C \cap P|\ge \varepsilon |P|$, one has $N\cap C\neq\emptyset$. Weak $\varepsilon$-nets for separable convexity spaces with bounded Radon number were previously obtained by Moran and Yehudayoff~\cite{MoranY20} who  showed that if a finite separable convexity space has Radon number at most $D$, then it admits weak $\varepsilon$-nets of size at most $\left(\frac{120D^2}{\varepsilon}\right)^{4D^2\ln(1/\varepsilon)}$. Thus, for fixed $D$, their bound is quasi-polynomial in $1/\varepsilon$. They also proved a converse result showing that, in compact separable convexity spaces, finite Radon number characterizes the existence of weak $\varepsilon$-nets. Holmsen and Lee~\cite{HolmsenL21} later proved a weak $\varepsilon$-net theorem in the more general setting of arbitrary, not necessarily separable, convexity spaces with bounded Radon number. Their result proceeds through a fractional Helly theorem with fractional Helly number $m=m(D)$, where their proof gives a bound on $m(D)$ which is roughly $D^{D^{\lceil\log_2 D\rceil}}$. Combined with the general theorem of Alon, Kalai, Matou\v{s}ek and Meshulam~\cite{AKMM02}, this yields weak $\varepsilon$-nets of size polynomial in $1/\varepsilon$, but with an exponent depending on this very large parameter $m(D)$.

\paragraph{The $(p,q)$-theorem.}
Let $\mathcal F\subseteq \mathcal C$ be a finite family of nonempty convex sets. We say that $\mathcal F$ satisfies the \emph{$(p,q)$-property} if among any $p$ members of $\mathcal F$, some $q$ have a nonempty intersection. A \emph{transversal} for $\mathcal F$ is a set $N\subseteq X$ such that $N\cap F\neq\emptyset$ for every $F\in\mathcal F$. We denote the minimum size of such a transversal by $\tau(\mathcal F)$.

The classical $(p,q)$-theorem of Alon and Kleitman~\cite{AlonK92} asserts that, for every $p\ge q\ge d+1$, every family of compact convex sets in $\mathbb R^d$ satisfying the $(p,q)$-property has a transversal whose size is bounded in terms of $p,q$, and $d$ only. The quantitative behavior of the transversal remains a central problem in discrete geometry. The best known general upper and lower bounds in the Euclidean setting were proved in~\cite{KellerST17} and  in~\cite{KellerS21}. In a different direction, Alon, Kalai, Matou\v{s}ek and Meshulam~\cite{AKMM02} proved a topological $(p,q)$-theorem for so-called finite good covers, without specifying quantitative bounds.

\subsection{Our results}

\textbf{Colorful $(k,r)$-shattering and a bound on the colorful VC-dimension.} The main technical contribution of this paper is a new notion of $k$-wise colorful VC-dimension and a bound on this dimension in terms of the ordinary VC-dimension. In order to present the new notion, a few more definitions are needed.


Let $H=(V,E)$ be a hypergraph. A \emph{colored set} is an ordered family $\mathcal P=(C_1,\ldots,C_\ell)$ of pairwise disjoint subsets of $V$, called color classes. Throughout this subsection, we assume that $|C_s|=r$ for all $s\in[\ell]$. Put $S=C_1\cup\cdots\cup C_\ell$.
A \emph{rainbow partition} of $S$ (w.r.t. $\mathcal P$) into $r$ parts is an ordered partition $S=R_1\dot\cup\cdots\dot\cup R_r$ such that $|R_i\cap C_s|=1$ for every $i\in[r]$ and every $s\in[\ell]$.

\begin{definition}[Colorful $(k,r)$-shattering and the colorful VC-dimension]
We say that $\mathcal P$ is \emph{colorfully $(k,r)$-shattered} by $H$ if for every rainbow partition $S=R_1\dot\cup\cdots\dot\cup R_r$ there exist distinct indices $i_1,\ldots,i_k\in[r]$ and hyperedges $e_{i_1},\ldots,e_{i_k}\in E$ such that $R_{i_j}\subseteq e_{i_j}$ for all $j=1,\ldots,k$, and $ e_{i_1}\cap\cdots\cap e_{i_k}=\emptyset$.

The \emph{colorful $(k,r)$-VC-dimension of $H$}, denoted $\operatorname{VC}^{{col}}_{k,r}(H)$, is 
the supremum of the numbers $\ell$ of color classes in a colorfully $(k,r)$-shattered colored set with color classes of size $r$. When $k,r$ are clear from the context, $\operatorname{VC}^{{col}}_{k,r}(H)$ is called `the colorful VC-dimension of $H$'.
\end{definition}

Our new definition differs from the definition of $r$-VC-dimension of Alon and Smorodinsky~\cite{AlonS26} in three aspects: the colorful structure, the $k$-wise formulation, and the strengthening of the requirement $S\cap e_{i_1}\cap\cdots\cap e_{i_k}=\emptyset$ (in~
\cite{AlonS26}) to $e_{i_1}\cap\cdots\cap e_{i_k}=\emptyset$. While the colorful aspect is, of course, needed for all the colorful theorems proved below, the direct improvement over~\cite{AlonS26} in our Tverberg theorem for separable convexity spaces, Theorem~\ref{thm:improved-tverberg} below, essentially comes only from the last two aspects mentioned above.


\medskip We prove that the colorful $(k,r)$-VC-dimension can be bounded effectively in terms of the ordinary VC-dimension.

\begin{theorem}[Colorful VC-dimension bound]\label{thm:intro-colorful-vc-bound}
Let $H=(V,E)$ be a hypergraph with $\operatorname{VCdim}(H)\le v$. Then $\operatorname{VC}^{col}_{k,r}(H)\le O(kv\log(kvr))$.
\end{theorem}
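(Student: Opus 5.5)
The plan is a double-counting argument in the spirit of the proof of the Perles--Sauer--Shelah-based bound of Alon and Smorodinsky~\cite{AlonS26}. Fix a colored set $\mathcal P=(C_1,\ldots,C_\ell)$ with $|C_s|=r$ that is colorfully $(k,r)$-shattered, and put $S=C_1\cup\cdots\cup C_\ell$, so $|S|=\ell r$. There are exactly $(r!)^{\ell}$ ordered rainbow partitions of $S$ (in each class independently, choose which element goes to which part).

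\emph{Certificates.} For each rainbow partition, shattering provides a \emph{certificate}: distinct indices $i_1,\ldots,i_k$ together with the traces $T_j=e_{i_j}\cap S$. Because $e_{i_1}\cap\cdots\cap e_{i_k}=\emptyset$, these traces satisfy $T_1\cap\cdots\cap T_k=\emptyset$. By the Perles--Sauer--Shelah lemma, the number of possible certificates is at most
\[ r^k\,\pi_H(\ell r)^k\le r^k\bigl(e\ell r/v\bigr)^{vk}. \]

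\emph{Partitions covered by one certificate.} Fix a certificate and ask how many rainbow partitions it can serve, i.e.\ satisfy $R_{i_j}\subseteq T_j$ for all $j$. This factorizes over the colour classes. In class $C_s$, the partition restricts to a uniformly random bijection $C_s\to[r]$. Let $x_j$ be the element sent to part $i_j$. Every $y\in C_s$ is missing from some trace $T_{j(y)}$.

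Consider the events $\{x_{j(y)}=y\}$ for $y\in C_s$. Each has probability $1/r$, so their probabilities sum to $1$. Each event forces the failure $x_{j(y)}\notin T_{j(y)}$. Any single outcome lies in at most $k$ of these events, since $x_j$ takes one value for each of the $k$ indices $j$. Hence the probability of failure is at least $1/k$. The fraction of bijections on $C_s$ compatible with the certificate is therefore at most $1-1/k$, and a certificate serves at most $(r!)^\ell(1-1/k)^\ell$ partitions.

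\emph{First bound.} Comparing the two counts gives
\[ e^{\ell/k}\le r^k(e\ell r)^{vk}, \]
that is, $\ell\le k^2\bigl(\log r+v\log(e\ell r)\bigr)$, which yields $\ell=O(k^2v\log(kvr))$.

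\emph{Removing the extra factor of $k$ (the main obstacle).} The loss of $1/k$ per class is essentially tight for a single class: if $T_1$ misses one element and every other $T_j$ is the whole class, the failure probability is about $1/r$. So the improvement must come from a weighted count rather than a sharper per-class bound. The refined per-class estimate I would use is that the failure probability is at least of order $\min\bigl(1,\sum_j|C_s\setminus T_j|/r\bigr)/k$. The intended approach is to:
\begin{itemize}
\item charge each certificate according to the total size $M$ of the complements $S\setminus T_j$;
\item count certificates with complements of total size $M$ more economically, since small complements should be describable with fewer than $vk\log(\ell r)$ bits;
\item balance this saving against the loss $\exp(-\Omega(M/(rk)))$ in the number of partitions served.
\end{itemize}
If the per-class loss can be made constant on average in this way, the inequality becomes $\ell\lesssim kv\log(\ell r)+k\log r$, giving $O(kv\log(kvr))$.

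A fallback, if the weighting does not balance, is to replace the $k$ edges by the pair ($(k-1)$-fold intersection, last edge). The family of $(k-1)$-fold intersections of edges has VC-dimension $O(kv\log k)$, so such pairs can be counted with exponent $O(kv\log k)$. However, the compatibility condition must then be recast, because $R_{i_j}\subseteq e_{i_j}$ does not say that $R_{i_j}$ lies in the intersection trace. This is the step I expect to require the most care.
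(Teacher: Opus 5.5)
\textbf{There is a genuine gap.} Your certificate count and the factorization over colour classes are the same as in the paper's proof. However, the proposal as written only proves $\ell=O(k^2v\log(kvr))$. The step that removes the extra factor of $k$ is not carried out, and the reason you give for it being hard is wrong.

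Your tightness example ($T_1$ misses one element of $C_s$ and every other $T_j\supseteq C_s$) violates the constraint you stated yourself. Since $T_1\cap\cdots\cap T_k=\emptyset$, every $y\in C_s$ must be missed by some trace. The example is in fact inconsistent with your own bound: it gives failure probability $1/r<1/k$. For the same reason $\sum_j|C_s\setminus T_j|\ge r$ always holds, so your ``refined'' estimate collapses back to $1/k$. The proposed weighting by $M$ is also vacuous, because $M\ge|S|=\ell r$ for every certificate, so there is no small-complement regime to exploit. The fallback via $(k-1)$-fold intersections is left unresolved.

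\textbf{The missing idea.} The per-class loss is a constant independent of $k$. Because every element of $C_s$ has at least one forbidden part, at most a $1/2$ fraction of the bijections on $C_s$ is compatible with a fixed certificate.

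The paper obtains this from the Bregman--Minc inequality. The $r\times r$ compatibility matrix has at most $r-1$ ones in each row, so its permanent is at most $((r-1)!)^{r/(r-1)}$. By AM--GM, $r!/((r-1)!)^{r/(r-1)}\ge 2$.

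An elementary alternative fits your probabilistic framing. For each $y$, fix one forbidden part $f(y)=i_{j(y)}$. Compatibility of the random bijection $\sigma$ forces $\sigma(y)\neq f(y)$ for all $y$. Two elements with the same forbidden part cannot both be placed there, so those pairwise terms vanish. The degree-two Bonferroni inequality then gives
\[
\Pr\Bigl[\bigcup_{y\in C_s}\{\sigma(y)=f(y)\}\Bigr]\ \ge\ \sum_{y}\frac1r-\sum_{\{y,y'\}:\,f(y)\neq f(y')}\frac{1}{r(r-1)}\ \ge\ 1-\binom r2\frac{1}{r(r-1)}=\frac12 .
\]

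With this per-class bound, your comparison becomes $2^\ell\le r^k\pi_H(\ell r)^k$. This yields $\ell=O(kv\log(\ell r))$ and hence $\ell=O(kv\log(kvr))$, which is exactly the paper's argument.
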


This theorem is the combinatorial core of the paper. It is a general statement about set systems of bounded VC-dimension, independent of convexity. In the geometric applications, it is applied either to the halfspace hypergraph of a separable convexity space or to auxiliary hypergraphs arising from unions of convex sets.

\paragraph{Colorful Tverberg theorems.}
We apply the colorful VC-dimension bound to separable convexity spaces. In such a space, the halfspace hypergraph has VC-dimension $D-1$, where $D$ is the Radon number (see Lemma \ref{lem:vc-radon} below). 
We start with the following colorful $k$-wise Tverberg theorem.

\begin{theorem}[Colorful Tverberg theorem -- a first tradeoff]\label{thm:colorful-tverberg}
Let $(X,\mathcal C)$ be a separable convexity space with Radon number $D$. Let $r\ge 2$ and $1\le k\le r$ be integers. There exists an absolute constant $C>0$ such that the following holds. Let $C_1,\ldots,C_\ell\subseteq X$ be pairwise disjoint color classes, each of size $r$, with $\ell\ge CkD\log(kDr)$. Then there exists a rainbow partition $C_1\cup\cdots\cup C_\ell=R_1\dot\cup\cdots\dot\cup R_r$ such that for every choice of distinct indices $i_1,\ldots,i_k\in[r]$, one has $\operatorname{conv}(R_{i_1})\cap\cdots\cap\operatorname{conv}(R_{i_k})\neq\emptyset$.
\end{theorem}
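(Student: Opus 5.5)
The plan is to argue by contradiction, using the colorful VC-dimension bound (Theorem~\ref{thm:intro-colorful-vc-bound}) applied to the halfspace hypergraph $\mathcal H$ of $(X,\mathcal C)$. This is the hypergraph on $X$ whose hyperedges are all halfspaces, and by Lemma~\ref{lem:vc-radon} it has $\operatorname{VCdim}(\mathcal H)\le D-1$.

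Suppose that no rainbow partition of $S=C_1\cup\cdots\cup C_\ell$ has the required property. Then for every rainbow partition $S=R_1\dot\cup\cdots\dot\cup R_r$ there are distinct indices $i_1,\ldots,i_k$ with
\[
\operatorname{conv}(R_{i_1})\cap\cdots\cap\operatorname{conv}(R_{i_k})=\emptyset .
\]
The key step is to convert this failure into empty intersections of halfspaces. We want halfspaces $h_{i_j}\supseteq\operatorname{conv}(R_{i_j})$, $j=1,\ldots,k$, with $h_{i_1}\cap\cdots\cap h_{i_k}=\emptyset$. This would show that $(C_1,\ldots,C_\ell)$ is colorfully $(k,r)$-shattered by $\mathcal H$.

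Separability gives this directly only for $k=2$: two disjoint convex sets $A,B$ are separated by a halfspace $H$, and we take $h_{i_1}=H$ and $h_{i_2}=X\setminus H$. For general $k$ I would proceed inductively, using only the first separation property and closure under intersections. Let $A=\operatorname{conv}(R_{i_1})\cap\cdots\cap\operatorname{conv}(R_{i_{k-1}})$, which is convex, and $B=\operatorname{conv}(R_{i_k})$. Since $A\cap B=\emptyset$, separate them by a halfspace $H$ with $B\subseteq X\setminus H$ and $A\subseteq H$. Then replace the family by $\operatorname{conv}(R_{i_1})\cap H,\ldots,\operatorname{conv}(R_{i_{k-1}})$, which is still a family of convex sets with empty intersection, and recurse on $k-1$ sets.

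This produces a chain of halfspaces, but a single set ends up having to be contained in the intersection of several halfspaces, and that intersection is not itself a halfspace. This is the main obstacle. I expect to handle it in one of two ways.

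\emph{First option.} Change the recursion so that each $\operatorname{conv}(R_{i_j})$ is assigned exactly one halfspace. Try a Helly/Kakutani-type argument in separable spaces: a family of $k$ convex sets with empty intersection admits halfspaces $h_j\supseteq K_j$ with $\bigcap_j h_j=\emptyset$. One proves this by enlarging each $K_j$ one at a time to a maximal convex set containing $K_j$ that keeps the total intersection empty. Such a maximal set should be a halfspace, since its complement is convex by separability and maximality (a Zorn/Kakutani-lemma argument).

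\emph{Second option.} Work with the auxiliary hypergraph of intersections of at most $k$ halfspaces. Its VC-dimension is $O(Dk\log k)$, and the extra factors would be absorbed in the constant and the logarithm; the first option is cleaner.

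Once colorful shattering is established, Theorem~\ref{thm:intro-colorful-vc-bound} with $v=D-1$ gives $\ell\le O(kD\log(kDr))$. Choosing the absolute constant $C$ larger than the implied constant contradicts $\ell\ge CkD\log(kDr)$, and the theorem follows.
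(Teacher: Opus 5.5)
Your first option is correct and is essentially the paper's argument. With $K_j=\operatorname{conv}(R_{i_j})$, the paper separates $K_1$ from the convex set $K_2\cap\cdots\cap K_k$ by a halfspace $h_{i_1}\supseteq K_1$. It then separates $K_2$ from $h_{i_1}\cap K_3\cap\cdots\cap K_k$, and so on. Each $K_j$ is thus enlarged to exactly one halfspace while the total intersection stays empty.

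The maximality/Zorn step is unnecessary. The first separation property directly gives a halfspace containing $K_j$ and disjoint from the convex intersection of the others. Your own proof that the maximal set is a halfspace uses exactly this. Dropping Zorn also removes any reliance on closure under nested unions, which this theorem does not need.

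Two other claims in your write-up are wrong, although your chosen route does not depend on them.

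\emph{The initial recursion.} You choose $H$ with $A=K_1\cap\cdots\cap K_{k-1}\subseteq H$ and $K_k\subseteq X\setminus H$. The family $K_1\cap H,K_2,\ldots,K_{k-1}$ then has intersection $A\cap H=A$, which is generally nonempty. Moreover, intersecting with $H$ shrinks $K_1$, so a halfspace obtained later need not contain $R_{i_1}$. The right continuation is to keep $X\setminus H$ as a member of the family. The family $K_1,\ldots,K_{k-1},X\setminus H$ still has empty intersection, and you then separate the next $K_j$ from the intersection of all the other members. The ``obstacle'' then disappears; this is exactly the paper's procedure with the indices reversed.

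\emph{The fallback via intersections of at most $k$ halfspaces.} This does not give the stated bound. Inserting $v=O(Dk\log k)$ into Theorem~\ref{thm:intro-colorful-vc-bound} yields $O(k^2D\log k\,\log(kDr))$ color classes. The extra factor of order $k\log k$ cannot be absorbed into an absolute constant, since $k$ may be as large as $r$.
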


Theorem \ref{thm:colorful-tverberg}, as well as Theorem \ref{thm:large-color-classes-colored-tverberg} below, answers a question of Kalai \cite{KalaiPrivate} who asked about purely combinatorial proofs for colorful Tverberg-type theorems. It gives a rainbow partition theorem in a setting where the classical topological tools used in Euclidean colored Tverberg theorems are not available. Moreover, by Levi's theorem \cite{Levi51}, the Helly number of the space is at most $D-1$, and therefore for $k=D-1$, the $k$-wise conclusion implies that all $r$ convex hulls have a common point. Thus, Theorem~\ref{thm:colorful-tverberg} immediately yields an ordinary colored Tverberg theorem with $O(D^2\log(Dr))$ color classes, each of size $r$, in the range $r>D$. This is the first tradeoff: the color classes have the optimal size $r$, but the number of colors grows logarithmically with $r$. 

Theorem~\ref{thm:colorful-tverberg} also serves as a main tool for our colorful selection lemma. This, in turn, improves all subsequent quantitative bounds, for example, the weak $\varepsilon$-net theorem and the $(p,q)$-theorem.

Our colorful shattering technique also yields a general result for unions of convex sets. This direction is closely related to the Radon and Tverberg-type theorems for $s$-convex sets proved by Alon and Smorodinsky~\cite{AlonS26}, which address Kalai's question on Radon-type theorems for unions of convex sets. We prove a colorful extension of their Tverberg theorem. For simplicity, we state here the ordinary, non-$k$-wise, form of the result; the more general $k$-wise version is stated and proved in Section~\ref{sec:s-convex}. In a convexity space $(X,\mathcal C)$, a set $A \subset X$ is called $s$-convex if it is the union of at most $s$ convex sets.

\begin{theorem}[Colorful Tverberg theorem for $s$-convex sets]\label{thm:intro-colorful-tverberg-sconvex}
Let $(X,\mathcal C)$ be a separable convexity space with Radon number $D$, and let $s$ and $r\ge 2$ be integers. There is an absolute constant $C>0$ such that the following holds. Let $C_1,\ldots,C_\ell\subseteq X$ be color classes, each of size $r$, where
\[
\ell\ge C rD s^r\log(s^r+1)\log\!\left(r^2D s^r\log(s^r+1)\right).
\]
Then there exists a rainbow partition $C_1\cup\cdots\cup C_\ell=R_1\dot\cup\cdots\dot\cup R_r$ such that for every choice of $s$-convex sets $F_1,\ldots,F_r\subseteq X$ satisfying $R_i\subseteq F_i$ for every $i\in[r]$, one has $F_1\cap\cdots\cap F_r\neq\emptyset$.
\end{theorem}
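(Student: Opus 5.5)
We will derive the statement from Theorem~\ref{thm:intro-colorful-vc-bound} (with $k=r$) applied to an auxiliary hypergraph whose hyperedges encode $s$-convex sets through halfspaces. The contrapositive is what we prove: we assume that for every rainbow partition $R_1,\dots,R_r$ there are $s$-convex sets $F_i\supseteq R_i$ with $F_1\cap\cdots\cap F_r=\emptyset$. We then exhibit a hypergraph of small VC-dimension that colorfully $(r,r)$-shatters $(C_1,\dots,C_\ell)$. This will contradict Theorem~\ref{thm:intro-colorful-vc-bound} once $\ell$ exceeds the stated bound.

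\emph{Reduction to halfspaces.} Write each $F_i=\bigcup_{t=1}^s K_{i,t}$ with $K_{i,t}$ convex. Shrinking each $K_{i,t}$ to $\operatorname{conv}(K_{i,t}\cap R_i)$ keeps $R_i\subseteq F_i$ and preserves emptiness of the intersection, so we may assume each piece is the hull of a subset of $R_i$. Now $F_1\cap\cdots\cap F_r=\emptyset$ means that for every choice $(t_1,\dots,t_r)\in[s]^r$ the convex sets $K_{1,t_1},\dots,K_{r,t_r}$ have empty intersection. For each such tuple the two disjoint convex sets $K_{1,t_1}\cap\cdots\cap K_{r-1,t_{r-1}}$ and $K_{r,t_r}$ are separated by a halfspace by the first separation property. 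A more uniform alternative is to use Levi/Helly and iterate separation so that the emptiness is witnessed by halfspaces $H_{i,\tau}\supseteq K_{i,\tau_i}$, indexed by the tuple $\tau$, with $\bigcap_i H_{i,\tau}=\emptyset$. We then replace $F_i$ by
\[
G_i=\bigcup_{t\in[s]}\ \bigcap_{\tau:\tau_i=t} H_{i,\tau}.
\]
Each $G_i$ is a union of $s$ intersections of $s^{r-1}$ halfspaces, and it contains $K_{i,t}$, hence $R_i$. Moreover, $G_1\cap\cdots\cap G_r=\emptyset$: a common point lies in some $\bigcap_{\tau:\tau_i=t_i}H_{i,\tau}$ for each $i$, so it lies in $H_{i,\tau}$ for all $i$ with $\tau=(t_1,\dots,t_r)$, which is impossible.

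\emph{VC bound for the auxiliary hypergraph.} Let $E$ consist of all sets that are unions of $s$ intersections of at most $s^{r-1}$ halfspaces each, so at most $m=s^r$ halfspaces in total. By Lemma~\ref{lem:vc-radon}, halfspaces have VC-dimension $D-1$. The standard Boolean-combination bound via Perles--Sauer--Shelah gives $\pi_E(n)\le (en/(D-1))^{(D-1)m}$. Solving $2^n\le\pi_E(n)$ yields $\operatorname{VCdim}(E)=O(Dm\log(m+1))=O(Ds^r\log(s^r+1))$.

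\emph{Conclusion.} The previous step shows that $(C_1,\dots,C_\ell)$ is colorfully $(r,r)$-shattered by $E$. Theorem~\ref{thm:intro-colorful-vc-bound} with $k=r$ and $v=O(Ds^r\log(s^r+1))$ then gives
\[
\ell\le O\bigl(rv\log(rvr)\bigr)=O\bigl(rDs^r\log(s^r+1)\log(r^2Ds^r\log(s^r+1))\bigr),
\]
which contradicts the hypothesis for a suitable $C$. The main obstacle is the reduction step. We must make sure the separation argument really produces halfspaces witnessing emptiness for all $s^r$ tuples simultaneously, with only $s^r$ halfspaces total and not more. Done naively with pairwise separation, the $r$-wise emptiness does not directly yield a single halfspace per part. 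We would handle this inductively: for each tuple, separate $K_{r,t_r}$ from $\bigcap_{i<r}K_{i,t_i}$, then recurse on the latter intersection intersected with the halfspace's complement side. This may multiply the count by a factor of $r$, which the $\log$ terms should absorb.
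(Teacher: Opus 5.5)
Your proposal is correct and is essentially the paper's proof: your $G_i$ is exactly the enlargement of Lemma~\ref{lem:AS-sconvex-polytopal-enlargement} (with $q=r$), your VC estimate is Lemma~\ref{lem:AS-vc-polytopal-unions} with $a=s$, $t=s^{r-1}$, and the paper then applies Theorem~\ref{thm:intro-colorful-vc-bound} with $k=r$ (via the $k$-wise Theorem~\ref{thm:colorful-tverberg-sconvex}). The step you flag as the main obstacle is resolved by the one-at-a-time replacement used in that lemma, which needs no Helly argument: for fixed $\tau$, replace each $K_{i,\tau_i}$ in turn by a halfspace separating it from the intersection of the current other sets, where the halfspaces already chosen are included in that intersection. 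In particular, at the next step you intersect with $H_{r,\tau}$ itself, the side containing $K_{r,\tau_r}$, not with its complement. This yields exactly one $H_{i,\tau}$ per $(i,\tau)$, so no extra factor $r$ appears. Such a factor would not have been absorbed by the logarithms anyway, since it would multiply $v$, and hence the bound on $\ell$, by $r$.
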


Although this result is important in its own right, and provides a colored version of the Tverberg-type theorem for unions of convex sets, we postpone its proof to Section~\ref{sec:s-convex}. The reason is structural: the main body of the paper is organized as a chain of implications, from the colorful Tverberg theorem for separable convexity spaces to the uncolored selection lemma, the colorful selection lemma, weak $\varepsilon$-nets, and the quantitative $(p,q)$-theorem. By contrast, Theorem~\ref{thm:intro-colorful-tverberg-sconvex} is a standalone application of the same shattering technique.

The bound in Theorem~\ref{thm:intro-colorful-tverberg-sconvex} is closer in spirit to the Tverberg-type bound for unions of convex sets of \cite{AlonS26}, than to the improved bound in Theorem~\ref{thm:colorful-tverberg}. The reason is that, in the $s$-convex setting, we do not have the same Helly-type reduction that is used for convex sets. In the proof of Theorem~\ref{thm:colorful-tverberg}, Levi's theorem allows us to pass from $(D-1)$-wise intersection to full $r$-fold intersection. For unions of convex sets in a general separable convexity space, no analogous reduction is available, so in Section~\ref{sec:s-convex} we apply the $k$-wise statement with $k=r$.

\paragraph{The uncolored Tverberg consequences.}
The uncolored $k$-wise Tverberg theorem follows from Theorem~\ref{thm:colorful-tverberg} by partitioning an arbitrary point set into color classes, each of size $r$, and then forgetting the colors.

\begin{theorem}[$k$-wise Tverberg theorem]\label{thm:k-wise-tverberg}
Let $(X,\mathcal C)$ be a separable convexity space with Radon number $D$. Then every finite set $P\subseteq X$ of size at least $O(kDr\log(Dr))$ admits a partition $P=P_1\dot\cup\cdots\dot\cup P_r$ such that every $k$ of the convex hulls $\operatorname{conv}(P_1),\ldots,\operatorname{conv}(P_r)$ have a nonempty intersection.
\end{theorem}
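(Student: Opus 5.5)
The plan is to derive Theorem~\ref{thm:k-wise-tverberg} directly from Theorem~\ref{thm:colorful-tverberg} by imposing an arbitrary coloring on $P$ and then forgetting it.

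\textbf{Setting up the colors.} Let $C$ be the absolute constant from Theorem~\ref{thm:colorful-tverberg} and put $\ell=\lceil CkD\log(kDr)\rceil$. Since $k\le r$, we have $\log(kDr)\le 2\log(Dr)$, so $\ell r=O(kDr\log(Dr))$. Hence we may assume $|P|\ge \ell r$ after choosing the implicit constant in the statement appropriately.

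\textbf{Choosing the color classes.} Pick any $\ell r$ points of $P$. Split them arbitrarily into pairwise disjoint color classes $C_1,\ldots,C_\ell$, each of size $r$. Theorem~\ref{thm:colorful-tverberg} then gives a rainbow partition $C_1\cup\cdots\cup C_\ell=R_1\dot\cup\cdots\dot\cup R_r$ in which every $k$ of the hulls $\operatorname{conv}(R_i)$ have a common point.

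\textbf{Handling the leftover points.} Let $P_1=R_1\cup(P\setminus\bigcup_s C_s)$ and $P_i=R_i$ for $i\ge2$. This is a partition of $P$ into $r$ parts. Convex hulls are monotone under inclusion: every convex set containing $P_1$ contains $R_1$, so $\operatorname{conv}(R_1)\subseteq\operatorname{conv}(P_1)$. Therefore every $k$-wise intersection of the hulls $\operatorname{conv}(P_i)$ contains the corresponding $k$-wise intersection of the hulls $\operatorname{conv}(R_i)$, and is nonempty.

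\textbf{Main obstacle.} The only thing to check is the asymptotic bookkeeping, namely that the $\log k$ inside $\log(kDr)$ is absorbed into $O(\log(Dr))$. This holds because $k\le r$. No genuine obstacle is expected; all the difficulty sits in Theorem~\ref{thm:colorful-tverberg}, and hence in Theorem~\ref{thm:intro-colorful-vc-bound}.
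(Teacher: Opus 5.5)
Your proposal is correct and matches the paper's proof essentially step for step: split $\ell r$ points of $P$ into $\ell=O(kD\log(kDr))$ color classes of size $r$, apply Theorem~\ref{thm:colorful-tverberg}, and distribute the leftover points. The paper adds the leftover points to the parts arbitrarily, while you put them all into $P_1$; this is immaterial by monotonicity of $\operatorname{conv}$, and the paper likewise uses $k\le r$ to absorb $\log(kDr)$ into $O(\log(Dr))$.
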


This $k$-wise theorem is also of independent interest. In the terminology of Reay's relaxation, it gives an abstract quantitative bound on the number of points needed to force a partition whose parts are $k$-wise intersecting. As above, the ordinary Tverberg theorem follows by taking $k=D-1$: by Levi's theorem, the Helly number is at most $D-1$, and hence a $(D-1)$-wise intersecting family of convex hulls has a common point. This gives the improved Tverberg theorem.

\begin{theorem}[A Tverberg theorem for separable convexity spaces]\label{thm:improved-tverberg}
Let $(X,\mathcal C)$ be a separable abstract convexity space with Radon number $D$. Then, for every $r>D$, $\operatorname{Tv}_{\mathcal C}(r)\le O\!\left(D^2r\log r\right)$.
\end{theorem}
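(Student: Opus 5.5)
We derive the statement from Theorem~\ref{thm:k-wise-tverberg} applied with $k=D-1$, together with Levi's theorem. Fix $r>D$ and a finite set $P\subseteq X$ with $|P|\ge C'(D-1)Dr\log(Dr)$, where $C'$ is the constant implicit in Theorem~\ref{thm:k-wise-tverberg}. Since $1\le D-1\le r$, the value $k=D-1$ is admissible. Theorem~\ref{thm:k-wise-tverberg} then gives a partition $P=P_1\dot\cup\cdots\dot\cup P_r$ such that every $D-1$ of the sets $\operatorname{conv}(P_1),\ldots,\operatorname{conv}(P_r)$ have a common point. (Degenerate small values of $D$, e.g.\ $D\le 2$ where $k=D-1\le 1$, are trivial or handled by monotonicity in $k$ and absorbed into constants.)

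Next we pass from $(D-1)$-wise to full intersection. By Levi's theorem the Helly number of $(X,\mathcal C)$ is at most $D-1$. The family $\{\operatorname{conv}(P_i)\}_{i\in[r]}$ is a finite family of convex sets, and all its $(D-1)$-subfamilies intersect. Hence $\bigcap_{i=1}^r\operatorname{conv}(P_i)\neq\emptyset$, so $P$ admits a Tverberg partition into $r$ parts.

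It remains to check the bound. For $r>D$ we have $\log(Dr)\le \log(r^2)=2\log r$. Therefore $(D-1)Dr\log(Dr)=O(D^2r\log r)$, and every $P$ of this size has a Tverberg partition into $r$ parts. A set larger than the threshold contains a subset of exactly the threshold size. We partition that subset as above and add the remaining points to any part; this only enlarges the convex hulls, so the common point is preserved. This gives $\operatorname{Tv}_{\mathcal C}(r)\le O(D^2r\log r)$.

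No step here is a serious obstacle: the weight lies entirely in Theorem~\ref{thm:k-wise-tverberg}, which in turn rests on Theorem~\ref{thm:colorful-tverberg} and the colorful VC bound. The only care needed is that Levi's Helly bound is applied to a finite family, that the admissibility condition $k\le r$ holds, which is where $r>D$ is used, and that the logarithm simplifies under $r>D$.
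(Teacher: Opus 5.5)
Your proposal is correct and follows the paper's proof. You apply Theorem~\ref{thm:k-wise-tverberg} with $k=D-1$, then use Levi's bound (Helly number at most $D-1$) to get a common point of all $r$ convex hulls; your checks that $k=D-1\le r$ and that $\log(Dr)\le 2\log r$ for $r>D$ spell out details the paper leaves implicit.
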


Thus, in separable convexity spaces, we obtain a Tverberg-type theorem that improves the Alon--Smorodinsky bound $O(Dr^2\log r)$ in the range $r>D$, and provides a step towards the weak Eckhoff conjecture in the separable setting.

\paragraph{Selection lemmas.}
We then apply a special case of the colorful Tverberg theorem (with the number of parts equal to the fractional Helly parameter $h=2^D+1$), together with the fractional Helly theorem, to obtain a colorful selection lemma (Theorem \ref{thm:colorful-selection} below). For simplicity, here we present the uncolored version.

\begin{theorem}[Selection lemma]\label{thm:selection_lemma}
Let $(X,\mathcal C)$ be a separable abstract convexity space with Radon number $D$. Then there exist an integer $a=a(D)=O(D^3)$ and a constant $\lambda>0$, depending only on $D$, such that the following holds. For every finite set $P\subseteq X$ of size $n\ge a$, there exists a point $x\in X$ that belongs to the convex hulls of at least $\lambda \binom{n}{a}$ subsets $A\subseteq P$ of size $a$. Equivalently,
$$
\bigl|\{A\in \binom{P}{a}:x\in \operatorname{conv}(A)\}\bigr|
\ge
\lambda \binom{n}{a}.
$$
\end{theorem}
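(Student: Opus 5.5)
We will follow the Alon--Kalai--Matou\v{s}ek--Meshulam scheme: a Tverberg-type statement combined with fractional Helly gives a selection lemma. The twist is that we feed in the $k$-wise Tverberg theorem (Theorem~\ref{thm:k-wise-tverberg}) with $r=h=2^D+1$ and $k=h$, where $h$ is the fractional Helly number. Let $a=N(D)$ be the number of points required by Theorem~\ref{thm:k-wise-tverberg} with these parameters. We should not use the $D^2r\log r$ bound, which would give an exponential $a$.

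\emph{Setting the parameters.} Using $k=D-1$ and Levi's theorem instead would only yield full Tverberg partitions into $h$ parts, with size $O(D^2 h\log h)$, again exponential in $D$. We need $O(D^3)$, so the number of parts must be polynomial. The plan is therefore to take $r=D$ or so and appeal to the colorful structure; the intended use is the colorful Theorem~\ref{thm:colorful-tverberg}. Fractional Helly is applied to a family of convex hulls of rainbow sets indexed by tuples, and we only need $h$-wise intersections among families of size $h$.

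\emph{Where the $O(D^3)$ comes from.} With $k=h$ and $r=h$, Theorem~\ref{thm:colorful-tverberg} needs $\ell\ge ChD\log(hDr)$ color classes, which is exponential. I expect the actual argument instead takes $r$ parts with $k=D-1$, and uses Levi to get a full $r$-fold intersection. Then $\ell=O(D^2\log(Dr))$ colors, each class of size $r$, so $a=\ell r$. Choosing $r=\Theta(D)$ gives $a=O(D^3\log D)$; I expect tightening the logarithm to give $a=O(D^3)$. The roles in the argument are then as follows.

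\begin{enumerate}
\item Take $t=a/r$ sized blocks. Consider the family $\mathcal F=\{\operatorname{conv}(A):A\in\binom{P}{a'}\}$ with $a'=\ell=O(D^2\log D)$.
\item Count $h$-tuples of $\mathcal F$ that intersect. A random $\ell r$-subset $Q$ of $P$, split into $\ell$ classes of size $r$, has a rainbow partition into $r$ sets of size $\ell$ with a common point.
\item Taking $r\ge h$ is unavoidable for a direct count, which forces the exponential size again. Instead, use a multi-stage "second selection" argument. Apply fractional Helly via the colorful selection lemma for $h$ colors, and bootstrap with Theorem~\ref{thm:colorful-tverberg} for $k$-wise intersections with $k\le h$ but $r$ small.
\end{enumerate}

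\emph{The main obstacle.} The hard step is reconciling the exponential fractional Helly number $h=2^D+1$ with a polynomial set size $a$. The first attempt I would make is to exploit that Theorem~\ref{thm:colorful-tverberg}'s bound depends on $k$ linearly but on $r$ only logarithmically. Take $k=D-1$ (via Levi) and $r=h$. Then $\ell=O(D^2\log(D\cdot 2^D))=O(D^3)$ colors, each class of size $h$, and each rainbow part $R_i$ has size exactly $\ell=O(D^3)$.

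So set $a=\ell$. Choose $\ell$ random disjoint $h$-subsets of $P$ as color classes. Theorem~\ref{thm:colorful-tverberg} plus Levi gives $h$ disjoint $a$-subsets whose hulls share a point. A standard double-counting shows that a positive fraction $\alpha(D)$ of the $h$-tuples of $\mathcal F=\{\operatorname{conv}(A):A\in\binom{P}{a}\}$ intersect, where we restrict to tuples of pairwise disjoint sets, which form a constant fraction of all $h$-tuples. The Holmsen--Patáková fractional Helly theorem then yields a point in $\beta(\alpha)|\mathcal F|$ hulls. This route gives $a=O(D^3)$ with $\lambda$ depending only on $D$; verifying the double count is routine.
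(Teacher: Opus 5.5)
\emph{Verdict: your final argument is correct, but it reaches the statement by a different route than the paper.}

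Only the last paragraph of your proposal is a proof; everything before it should be deleted. Several earlier claims are false and contradict your own endgame. The claim that taking $r\ge h$ ``forces the exponential size again'' conflates the total number of points $\ell h$ with the size $\ell$ of a single rainbow part, and only the latter becomes the subset size $a$. The detours through $r=\Theta(D)$, ``tightening the logarithm'', and a ``multi-stage second selection'' are not needed.

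The input you settle on is exactly the paper's Corollary~\ref{cor:colorful-restricted-tverberg}: $r=h=2^D+1$, $k=D-1$, plus Levi's theorem, giving $\ell=O(D^3)$. From there the routes diverge.

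\textbf{The paper's route.} It first proves a colorful selection lemma for disjoint $P_1,\dots,P_\ell$ of size $n$, using the family of hulls of colorful $\ell$-tuples. Each choice of $Q_i\subseteq P_i$ with $|Q_i|=h$ yields a distinct intersecting $h$-subfamily. So the intersecting fraction is at least $\binom{n}{h}^\ell/\binom{n^\ell}{h}\ge e^{-h}h^{-h(\ell-1)}$, and fractional Helly applies. Only then does it derive the uncolored statement, by splitting $P$ into $a=\ell$ classes of size $\lfloor n/a\rfloor$. This costs a factor $a!/(2a)^a$.

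\textbf{Your route.} You double count directly against $\binom{P}{\ell}$, and the count is indeed routine:
\begin{itemize}
\item There are $(n)_{\ell h}/(h!)^\ell$ ordered colorings of $P$ by $\ell$ disjoint $h$-sets.
\item Each coloring produces an intersecting $h$-subfamily of pairwise disjoint $\ell$-sets.
\item Each such subfamily $\{A_1,\dots,A_h\}$ arises from at most $(\ell!)^h$ colorings, one bijection $A_j\to[\ell]$ per part.
\item Comparing with $\binom{\binom{n}{\ell}}{h}\le n^{\ell h}/((\ell!)^h h!)$, the intersecting fraction is at least $\frac{(n)_{\ell h}}{n^{\ell h}}(h!)^{1-\ell}\ge 2^{-\ell h}(h!)^{1-\ell}$ for $n\ge 2\ell h$.
\end{itemize}
Your restriction to pairwise-disjoint tuples is therefore unnecessary. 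You should also state explicitly that the range $a\le n<2\ell h$ is absorbed into $\lambda$ by taking a single $A$. For $n<\ell h$ your count is empty.

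\textbf{Comparison.} Your route is a one-step proof of the uncolored lemma and avoids the $a!/(2a)^a$ loss. The paper's detour through the colorful selection lemma buys that lemma itself, which it needs afterwards for Theorem~\ref{thm:large-color-classes-colored-tverberg}.
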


In addition to being a key tool for our subsequent applications to weak $\varepsilon$-nets and quantitative $(p,q)$-theorems, this selection lemma is an abstract analogue, in the separable convexity setting, of the classical selection lemma for Euclidean convexity. The size of the selected subsets is bounded by $a=O(D^3)$, depending only polynomially on the Radon number of the space. This bound is much smaller than the superexponential bound that follows from \cite{AKMM02}. On the other hand, their result applies in a considerably more general abstract framework (in some sense).

The colorful variant of Theorem \ref{thm:selection_lemma} (Theorem \ref{thm:colorful-selection} below)
 also implies a second colored Tverberg-type statement in which the number of color classes is independent of the number of parts, and is bounded only in terms of $D$. This is closer in spirit to the classical Euclidean colored Tverberg conjecture, where the number of color classes is fixed in terms of the dimension, but here we allow each color class to be larger than $r$.

\begin{theorem}[A colorful Tverberg theorem -- a second tradeoff]\label{thm:large-color-classes-colored-tverberg}
Let $(X,\mathcal C)$ be a separable convexity space with Radon number $D$. Then there exist an integer $\ell_0=\ell_0(D)=O(D^3)$ and a constant $C_D>0$ such that the following holds. Let $r\ge 1$, and let $P_1,\ldots,P_{\ell_0}\subseteq X$ be $\ell_0$ color classes satisfying $|P_i|\ge C_Dr$ for every $i\in[\ell_0]$. Then there exist pairwise disjoint rainbow sets $A_1,\ldots,A_r$, each containing exactly one point from each color class $P_i$, such that $\bigcap_{j=1}^r\operatorname{conv}(A_j)\neq\emptyset$.
\end{theorem}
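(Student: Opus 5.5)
We derive this from the colorful selection lemma (Theorem~\ref{thm:colorful-selection}) by a greedy extraction argument, the same way the classical colorful selection lemma yields colored Tverberg-type statements with large color classes. Let $\ell_0=O(D^3)$ be the number of colors in the colorful selection lemma, and let $\lambda=\lambda(D)>0$ be its constant. The lemma says that for color classes $Q_1,\ldots,Q_{\ell_0}$ with $|Q_i|=m$, some point $x$ lies in $\operatorname{conv}(A)$ for at least $\lambda m^{\ell_0}$ rainbow $\ell_0$-tuples $A\in Q_1\times\cdots\times Q_{\ell_0}$.

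First we shrink all $P_i$ to a common size $m=\lceil C_D r\rceil$. Fix such an $x$ and consider the $\ell_0$-partite $\ell_0$-uniform hypergraph $\mathcal G$ on $Q_1\cup\cdots\cup Q_{\ell_0}$ whose edges are the rainbow tuples $A$ with $x\in\operatorname{conv}(A)$. It has density at least $\lambda$. We want $r$ pairwise vertex-disjoint edges of $\mathcal G$; these are exactly the sets $A_1,\ldots,A_r$, and $x\in\bigcap_j\operatorname{conv}(A_j)$.

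To find them, take a maximal matching $M$ in $\mathcal G$. If $|M|<r$, then every edge meets $V(M)$, which has at most $\ell_0(r-1)$ vertices. Each vertex lies in at most $m^{\ell_0-1}$ edges, so
\[
|E(\mathcal G)|\le \ell_0(r-1)m^{\ell_0-1}.
\]
Comparing this with $|E(\mathcal G)|\ge\lambda m^{\ell_0}$ gives $m<\ell_0 r/\lambda$. Hence choosing $C_D=\ell_0/\lambda+1$ forces $|M|\ge r$, and we are done.

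The key point is that a single point $x$ serves all $r$ sets simultaneously. This is exactly what the selection lemma provides, so no Helly or Tverberg step is needed afterward.

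The only potential obstacle is making sure the colorful selection lemma is available in the form used: a positive fraction of all rainbow tuples, for arbitrary equal-size classes of size at least some threshold. If it requires $m$ to be larger than a constant $m_0(D)$, we enlarge $C_D$ accordingly; since $r\ge1$, $C_D\ge m_0$ suffices.
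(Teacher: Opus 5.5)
Your proposal is correct and follows essentially the same route as the paper: restrict the $P_i$ to equal-size subsets and apply the colorful selection lemma to get a point $x$. Then form the $\ell_0$-partite hypergraph of rainbow tuples whose hulls contain $x$, and compare the maximal-matching bound $|E|\le \ell_0(r-1)m^{\ell_0-1}$ with $|E|\ge\lambda m^{\ell_0}$ to extract $r$ disjoint edges. The threshold you hedged about does exist: the colorful selection lemma needs class size at least $h=2^D+1$. The paper handles it exactly as you suggest, by taking $C_D\ge\max\{h,\,2\ell_0/\lambda_{\mathrm{col}}+1\}$.
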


\paragraph{Weak $\varepsilon$-nets.}
Our improved selection lemma provides an improved weak $\varepsilon$-net theorem by the greedy algorithmic argument of Alon, B\'ar\'any, F\"uredi and Kleitman~\cite{AlonBFK92}.

\begin{theorem}[Weak $\varepsilon$-nets for separable convexity spaces]\label{thm:weak-epsilon-nets}
Let $(X,\mathcal C)$ be a separable convexity space with Radon number $D$. Then there exist $a=a(D)=O(D^3)$ and a constant $C_D>0$, depending only on $D$, such that the following holds. For every finite set $P\subseteq X$ and every $0<\varepsilon\le 1$, there exists a weak $\varepsilon$-net $N\subseteq X$ for $P$ with $|N|\le C_D\varepsilon^{-a}$.
\end{theorem}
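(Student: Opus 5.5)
We will follow the greedy argument of Alon, B\'ar\'any, F\"uredi and Kleitman~\cite{AlonBFK92}, with the selection lemma (Theorem~\ref{thm:selection_lemma}) as the only geometric input. Let $a=a(D)=O(D^3)$ and $\lambda=\lambda(D)>0$ be as in Theorem~\ref{thm:selection_lemma}. Fix a finite $P\subseteq X$ with $|P|=n$ and $0<\varepsilon\le 1$. First we dispose of the degenerate case $\varepsilon n< a$. In that case every convex set $C$ with $|C\cap P|\ge\varepsilon n$ still contains at least one point of $P$. Taking $N=P$ then works as long as $n\le C_D\varepsilon^{-a}$. If $n$ is larger than this, then $\varepsilon n\ge a$ anyway once $C_D\ge a$, so we may assume $\varepsilon n\ge a$ throughout.

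We construct $N$ greedily. Start with $N=\emptyset$. While some convex set $C\in\mathcal C$ has $|C\cap P|\ge\varepsilon n$ and $C\cap N=\emptyset$, pick such a $C$ and set $Q=C\cap P$, so $|Q|\ge\varepsilon n\ge a$. Apply Theorem~\ref{thm:selection_lemma} to $Q$ to obtain a point $x$ lying in $\operatorname{conv}(A)$ for at least $\lambda\binom{|Q|}{a}\ge\lambda\binom{\lceil\varepsilon n\rceil}{a}$ sets $A\in\binom{Q}{a}$. Add $x$ to $N$.

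The potential argument is as follows. Call $A\in\binom{P}{a}$ \emph{killed} by $N$ if $\operatorname{conv}(A)\cap N\ne\emptyset$. We claim the sets killed by the new point $x$ were not killed before. Indeed, any such $A$ lies in $Q\subseteq C$, and $C$ is convex, so $\operatorname{conv}(A)\subseteq C$. Since $C\cap N=\emptyset$, no earlier point of $N$ lies in $\operatorname{conv}(A)$. Hence each step kills at least $\lambda\binom{\lceil\varepsilon n\rceil}{a}$ new $a$-subsets, out of $\binom{n}{a}$ in total. The number of steps is therefore at most
\[
\frac{\binom{n}{a}}{\lambda\binom{\lceil\varepsilon n\rceil}{a}}\le \frac{1}{\lambda}\Bigl(\frac{n}{\varepsilon n-a+1}\Bigr)^{a}.
\]
When $\varepsilon n\ge 2a$, this is at most $\lambda^{-1}(2/\varepsilon)^{a}=C_D\varepsilon^{-a}$. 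When the process stops, $N$ is a weak $\varepsilon$-net by construction.

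The only delicate step is the binomial estimate in the intermediate range $a\le\varepsilon n<2a$, where $\binom{\lceil\varepsilon n\rceil}{a}$ may be as small as $1$. Here we again use the trivial net $N=P$: in this range $n<2a/\varepsilon\le 2a\varepsilon^{-a}$, so $|N|=n\le 2a\,\varepsilon^{-a}$. Choosing $C_D=\max\{2a,\,2^a/\lambda\}$ covers all cases. The termination and disjointness argument itself is routine, and all the real difficulty is absorbed by Theorem~\ref{thm:selection_lemma}. That is why the exponent is $a=O(D^3)$.
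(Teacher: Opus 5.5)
Your proof is correct and follows the paper's argument: the same Alon--B\'ar\'any--F\"uredi--Kleitman greedy procedure, applying the selection lemma to $Q=C\cap P$ and counting newly covered $a$-subsets. The only difference is the binomial estimate. The paper bounds $\binom{n}{a}/\binom{s}{a}\le (en/s)^a\le (e/\varepsilon)^a$ using $\binom{s}{a}\ge (s/a)^a$, which holds for every $s\ge a$, so your separate (but valid) treatment of the range $a\le\varepsilon n<2a$ via the trivial net is not needed.
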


 Compared with Moran and Yehudayoff~\cite{MoranY20}, the dependence on $1/\varepsilon$ is improved from quasi-polynomial to polynomial for fixed $D$. The bound of~\cite{MoranY20}, however, holds in a more general setting where one only requires point-convex set separation. Compared with the general result of Holmsen and Lee~\cite{HolmsenL21}, which applies without separability, the exponent obtained here in the separable setting is substantially smaller\footnote{Their result proceeds through a fractional Helly theorem with fractional Helly number $m=m(D)$, where the available bound on $m(D)$ is roughly $D^{D\lceil\log_2D\rceil}$. }. 

\paragraph{A quantitative $(p,q)$-theorem.}
Let $(X,\mathcal C)$ be a separable convexity space with Radon number $D$, put $d=2^D$, and set $h=d+1$. Let $\beta_d(\alpha)$ denote a fractional Helly function for the space with fractional Helly number at most $h$.

\begin{theorem}[A quantitative \texorpdfstring{$(p,q)$}{(p,q)}-theorem]\label{thm:pq-separable}
Let $(X,\mathcal C)$ be a separable convexity space with Radon number $D$, and put $d=2^D$ and $h=d+1$. Let $a=a(D)=O(D^3)$ and $C_D$ be as in Theorem~\ref{thm:weak-epsilon-nets}. Then for every $p\ge q\ge h$, every finite family $\mathcal F\subseteq\mathcal C$ of nonempty convex sets satisfying the $(p,q)$-property admits a transversal of size at most
$$
C_D\left(\beta_d\left(\frac{\binom{q}{h}}{\binom{(p-1)(q-1)+1}{h}}\right)\right)^{-a}.
$$
\end{theorem}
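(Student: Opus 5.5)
We follow the standard Alon--Kleitman scheme from \cite{AKMM02}, with the weak $\varepsilon$-net bound of Theorem~\ref{thm:weak-epsilon-nets} as the final ingredient. Throughout, we may assume that each member of $\mathcal F$ is replaced by a finite set of points, or handle it via a fractional transversal directly.

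\textbf{Step 1: from the $(p,q)$-property to many intersecting $h$-tuples.} Let $n=|\mathcal F|$ and consider any multiset of $N=(p-1)(q-1)+1$ members of $\mathcal F$. If some member is repeated at least $q$ times, it trivially contains $q$ intersecting copies. Otherwise the multiset contains $p$ distinct members, and by the $(p,q)$-property some $q$ of them intersect. In either case, a single $N$-tuple contains at least $\binom{q}{h}$ intersecting $h$-subsets. Double counting over all $N$-subsets then shows that at least an $\alpha$-fraction of the $h$-tuples (taken with repetitions allowed) intersect, where
\[
\alpha=\binom{q}{h}\Big/\binom{(p-1)(q-1)+1}{h}.
\]

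\textbf{Step 2: a fractional transversal.} We apply the fractional Helly theorem of Holmsen--Pat\'akov\'a, with fractional Helly number at most $h$, to weighted multisets. Via LP duality, exactly as in Alon--Kleitman, this yields a fractional transversal of size at most $1/\beta_d(\alpha)$, in the following sense. For every rational weighting $w$ of $\mathcal F$, the fractional Helly theorem applied to the multiset given by $w$ produces a point lying in sets of total weight at least $\beta_d(\alpha)\,w(\mathcal F)$; this is the dual condition needed. Hence there is a finitely supported probability measure $\mu$ on $X$, supported on a finite set $P$ (one may use rational approximation and multiplicities), such that
\[
\mu(F)\ge \beta:=\beta_d(\alpha)\quad\text{for every } F\in\mathcal F.
\]
The main obstacle is the LP-duality step, which requires that the fractional Helly theorem be applicable to multisets. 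This holds because repeating a set does not affect the hypothesis of the theorem.

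\textbf{Step 3: weak $\varepsilon$-net.} Realize $\mu$ as a uniform measure on a finite multiset $P$, by taking rational weights and duplicating points. Theorem~\ref{thm:weak-epsilon-nets} extends to multisets, either by perturbing, or by noting that its proof via the selection lemma counts with multiplicity. Apply it with $\varepsilon=\beta$. Every $F\in\mathcal F$ is convex and satisfies $|F\cap P|\ge\beta|P|$, so it is hit by the resulting net $N$. Therefore $N$ is a transversal of $\mathcal F$ of size at most $C_D\beta^{-a}$, which is the claimed bound.
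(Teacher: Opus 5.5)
Your proposal is correct and follows the paper's proof essentially step for step. The steps are: the double count with $p'=(p-1)(q-1)+1$ over multisets (labelled copies) to get an $\alpha$-fraction of intersecting $h$-tuples; fractional Helly applied to those weighted families; LP duality to obtain a finite multiset on which every $F\in\mathcal F$ has a $\beta_d(\alpha)$-fraction of the points; and the weak $\varepsilon$-net theorem for multisets. Two small points. ``Perturbing'' has no meaning in an abstract convexity space, so the labelled-copies justification you also offer (the one the paper uses) is the one to keep. And the multiset in Step 1 should first be scaled to have at least $p'$ members, as the paper does, so that the double count is nonvacuous.
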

The theorem is stated in terms of the fractional Helly function $\beta_d$, since in this level of generality the dependence of $\beta_d(\alpha)$ on $\alpha$ is part of the quantitative input.

We end the paper with a brief discussion of what happens when our separation requirement is dropped to require only point--convex-set separation. Under a mild compactness assumption for finitely generated convex sets, the same shattering method still applies, after replacing halfspaces by intersections of $O(D)$ halfspaces. This yields analogues of the main results with slightly weaker dependence on $D$; see Section~\ref{sec:weak-separability}.

The logical flow of the main quantitative results is summarized in the following diagram.

\[
\begin{tikzcd}[
  column sep=0.6em,
  row sep=0.65em,
  cells={nodes={inner xsep=1pt, inner ysep=1pt}}
]
& \text{Colorful VC-dimension bound} \arrow[d, Rightarrow] & \\
& \text{Colorful $k$-wise Tverberg theorem}
  \arrow[dl, Rightarrow]
  \arrow[dr, Rightarrow]
& \\
\text{Colorful selection lemma}
  \arrow[d, Rightarrow]
  \arrow[drr, Rightarrow]
& &
\begin{array}{c}
\text{Colorful Tverberg theorem}\\[-0.15em]
\text{for $s$-convex sets}
\end{array}
\\
\text{uncolored selection lemma} \arrow[d, Rightarrow]
& &
\begin{array}{c}
\text{Colorful Tverberg theorem}\\[-0.15em]
\text{(2nd tradeoff)}
\end{array}
\\
\text{Weak $\varepsilon$-nets} \arrow[d, Rightarrow] & & \\
\text{quantitative $(p,q)$-theorem} & &
\end{tikzcd}
\]

\paragraph{Organization of the paper.}
In Section~\ref{sec:colorful-vc}  we introduce the colorful rainbow shattering notion and the colorful VC-dimension bound. In Section~\ref{sec:convexity-tverberg} we apply this bound to the halfspace hypergraph of a separable convexity space and derive the colorful and uncolored Tverberg theorems. In Section~\ref{sec:selection} we prove the colorful selection lemma, the uncolored selection lemma, and the second colored Tverberg tradeoff. In Sections~\ref{sec:epsnet} and~\ref{sec:pq} we derive the weak $\varepsilon$-net theorem and the quantitative $(p,q)$-theorem. In Section~\ref{sec:s-convex} we prove the colorful Tverberg theorem for unions of convex sets, and finally, in Section~\ref{sec:weak-separability} we discuss the variant under the weaker point--convex-set separation assumption.

\section{Colorful VC-dimension and rainbow shattering}\label{sec:colorful-vc}

We begin with the purely combinatorial part of the paper. Let $H=(V,E)$ be a hypergraph.
Alon and Smorodinsky~\cite{AlonS26} introduced an $r$-shattering notion suited to Tverberg-type problems. In one form, a finite set $S$ is $r$-shattered if for every ordered partition $S=S_1\dot\cup\cdots\dot\cup S_r$, one can find hyperedges $e_1,\ldots,e_r$ such that $S_i\subseteq e_i$ for every $i$ and $S \cap \left(\bigcap_{i=1}^r e_i \right)=\emptyset$. The variants below refine this idea in two directions: they allow only $k$ labels to be used in the certificate, and they impose a rainbow structure on the partitions. These two refinements are what makes the later $k$-wise and colorful Tverberg theorems possible.

\subsection{Colorful and rainbow shattering}

Recall that a \emph{colored set} in a hypergraph $H=(V,E)$, is an ordered family $\mathcal P=(C_1,\ldots,C_\ell)$ of pairwise disjoint subsets of $V$, called color classes. Throughout this subsection we assume that $|C_s|=r$ for all $s\in[\ell]$. Put $S=C_1\cup\cdots\cup C_\ell$.
A \emph{rainbow partition} of $S$ (w.r.t.~$\mathcal P$) into $r$ parts is an ordered partition $S=R_1\dot\cup\cdots\dot\cup R_r$ such that $|R_i\cap C_s|=1$ for every $i\in[r]$ and every $s\in[\ell]$.
$\mathcal P$ is \emph{colorfully $(k,r)$-shattered} by $H$ if for every rainbow partition $S=R_1\dot\cup\cdots\dot\cup R_r$ there exist distinct indices $i_1,\ldots,i_k\in[r]$ and hyperedges $e_{i_1},\ldots,e_{i_k}\in E$ such that $R_{i_j}\subseteq e_{i_j}$ for all $j=1,\ldots,k$, and $ e_{i_1}\cap\cdots\cap e_{i_k}=\emptyset$.
$\operatorname{VC}^{col}_{k,r}(H)$ denotes the \emph{colorful VC-dimension of $H$}, namely, the supremum of the numbers $\ell$ of color classes in a colorfully $(k,r)$-shattered colored set with color classes of size $r$.

\medskip

\noindent \textbf{Theorem \ref{thm:intro-colorful-vc-bound} -- restatement} (Colorful VC-dimension bound).
Let $H=(V,E)$ be a hypergraph with $\operatorname{VCdim}(H)\le v$, where $v\ge 1$. Then, for all $1\le k\le r$ with $r\ge 2$, $\operatorname{VC}^{col}_{k,r}(H)\le O(kv\log(kvr))$.

\medskip

\begin{proof}
Let $\mathcal P=(C_1,\ldots,C_\ell)$ be colorfully $(k,r)$-shattered by $H$, and put $S=C_1\cup\cdots\cup C_\ell$. Thus $|S|=\ell r$.

The number of rainbow partitions of $S$ into $r$ ordered parts is $(r!)^\ell$, since for each color class one chooses a bijection from its $r$ points to the $r$ parts.

Every rainbow partition is certified by an ordered $k$-tuple of distinct labels $i_1,\ldots,i_k\in[r]$ and by an ordered $k$-tuple of traces $A_j=e_{i_j}\cap S$, $j=1,\ldots,k$, such that $R_{i_j}\subseteq A_j$ for all $j$, and $A_1\cap\cdots\cap A_k=\emptyset$. The number of choices of the labels is at most $r^k$. By the Perles--Sauer--Shelah lemma, the number of traces of edges of $H$ on $S$ is at most $C(\ell r)^v$, for an absolute constant $C$. Hence the number of possible ordered $k$-tuples of traces is at most $(C(\ell r)^v)^k$.

Next, fix the labels $i_1,\ldots,i_k$ and the traces $A_1,\ldots,A_k$. We bound the number of rainbow partitions certified by this fixed choice. Consider a single color class $C_s$. We form an $r\times r$ matrix $M_s$ whose rows are indexed by the points of $C_s$ and whose columns are indexed by the parts $1,\ldots,r$. We put $(M_s)_{x,t}=1$ if the point $x$ is allowed to be placed in the part $R_t$ under the fixed certificate, and put $(M_s)_{x,t}=0$ otherwise. Thus, if $t=i_j$ for some $j$, then $x$ is allowed in the part $R_t$ only if $x\in A_j$.

Since $A_1\cap\cdots\cap A_k=\emptyset$, every point $x\in C_s$ is missing from at least one of the traces $A_j$. Therefore, each row of $M_s$ contains at most $r-1$ ones. The number of admissible bijective assignments of the points of $C_s$ to the parts is exactly the permanent $\operatorname{per}(M_s)$. If some row is zero, this number is zero and the desired bound is trivial. Otherwise, if the row sums are $b_1,\ldots,b_r$, then $1\le b_i\le r-1$ for all $i$. By the Bregman--Minc inequality \cite{Bregman73},
\[
\operatorname{per}(M_s)\le \prod_{i=1}^r ((b_i!)^{1/b_i})\le \left((r-1)!\right)^{r/(r-1)}.
\]
The right inequality holds since the function $b\mapsto (b!)^{1/b}$ is increasing for $b\ge 1$. Indeed, $(b!)^{1/b}\le ((b+1)!)^{1/(b+1)}$ is equivalent\footnote{If $t=b!$ then both inequalities are equivalent to the inequality $t^{b+1}\leq (b+1)^b\cdot t^b$.} to $b!\le (b+1)^b$, and the latter holds by the geometric--arithmetic mean inequality on the numbers $1,\ldots,b$. Thus, for the fixed certificate, the number of rainbow partitions it can certify is at most $\left(\left((r-1)!\right)^{r/(r-1)}\right)^\ell$.

Combining the estimates gives
\[
(r!)^\ell
\le
r^k\bigl(C(\ell r)^v\bigr)^k
\left(\left((r-1)!\right)^{r/(r-1)}\right)^\ell.
\]
Equivalently,
\[
\left(
\frac{r!}{\left((r-1)!\right)^{r/(r-1)}}
\right)^\ell
\le
r^k\bigl(C(\ell r)^v\bigr)^k.
\]
Let $\rho_r=\frac{r!}{\left((r-1)!\right)^{r/(r-1)}}=\frac{r}{\left((r-1)!\right)^{1/(r-1)}}$. By the arithmetic-geometric mean inequality applied to $1,2,\ldots,r-1$, we have $\left((r-1)!\right)^{1/(r-1)}\le r/2$. Hence $\rho_r\ge 2$ for all $r\ge 2$. Taking logarithms, we obtain $\ell\le O(k\log r+kv\log(\ell r))\le O(kv\log(\ell r))$.

The standard estimate $m\le A\log(Bm)$ implies $m=O(A\log(AB))$. Applying it with $m=\ell$, $A=O(kv)$ and $B=r$, we get $\ell\le O(kv\log(kvr))$.
\end{proof}

\section{From colorful VC-dimension to Tverberg theorems}\label{sec:convexity-tverberg}

We now pass from the general colorful VC framework to separable convexity spaces. 

\subsection{The halfspace hypergraph}

Let $(X,\mathcal C)$ be a separable convexity space. Let $H_{\mathcal C}$ be the halfspace hypergraph of the space: its vertex set is $X$, and its hyperedges are the halfspaces of $(X,\mathcal C)$.

We shall use the following simple observation, noted by Alon and Smorodinsky~\cite{AlonS26}.

\begin{lemma}\label{lem:vc-radon}
Let $(X,\mathcal C)$ be a separable convexity space with Radon number $D$. Then the halfspace hypergraph $H_{\mathcal C}$ has VC-dimension $\operatorname{VCdim}(H_{\mathcal C})=D-1$.
\end{lemma}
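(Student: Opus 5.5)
We prove two inequalities: no set of size $D$ is shattered by halfspaces, and some set of size $D-1$ is.

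\textbf{Upper bound $\operatorname{VCdim}(H_{\mathcal C})\le D-1$.} Let $S\subseteq X$ with $|S|=D$. By the definition of the Radon number $D=\operatorname{Tv}_{\mathcal C}(2)$, there is a partition $S=A\dot\cup B$ with $\operatorname{conv}(A)\cap\operatorname{conv}(B)\neq\emptyset$. Suppose some halfspace $H$ had trace $H\cap S=A$. Then $A\subseteq H$, and since $H\in\mathcal C$ we get $\operatorname{conv}(A)\subseteq H$. Also $B\subseteq X\setminus H$, which is convex, so $\operatorname{conv}(B)\subseteq X\setminus H$. Hence $\operatorname{conv}(A)\cap\operatorname{conv}(B)=\emptyset$, a contradiction. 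So the trace $A$ is missed and $S$ is not shattered. Only closure under intersections is needed here, so that $\operatorname{conv}$ is the smallest convex superset.

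\textbf{Lower bound $\operatorname{VCdim}(H_{\mathcal C})\ge D-1$.} By minimality of $D$, some set $S$ of size $D-1$ admits no Radon partition. This requires $|X|\ge D-1$; for $D=1$ the claim is trivial. Since $S$ has no Radon partition, for every partition $S=A\dot\cup B$ (allowing $A$ or $B$ empty) we have $\operatorname{conv}(A)\cap\operatorname{conv}(B)=\emptyset$. By the first separation property there is a halfspace $H$ with $\operatorname{conv}(A)\subseteq H$ and $\operatorname{conv}(B)\subseteq X\setminus H$, so $H\cap S=A$. Thus every subset of $S$ is realized as a trace, and $S$ is shattered.

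\textbf{Main subtleties.} The first is the edge cases in which $A$ or $B$ is empty. If $\operatorname{conv}(\emptyset)=\emptyset$, which holds since $\emptyset\in\mathcal C$, the separation is provided by $X$ or by $\emptyset$, both of which are halfspaces. The second is the convention for Radon partitions: whether the parts must be nonempty. Under either reading, the empty partitions are handled directly as above, so the argument goes through. The remaining task is to confirm that the minimality in the definition of $\operatorname{Tv}_{\mathcal C}(2)$ indeed gives a non-Radon set of size exactly $D-1$. This is immediate if the Tverberg property is monotone in $N$, since adding points preserves Radon partitions. Otherwise, we take $D-1$ to mean the largest size of a non-Radon set, as in the standard convention.
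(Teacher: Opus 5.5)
Your proof is correct and follows the paper's argument essentially verbatim. For the upper bound, a halfspace realizing the trace $A$ of a Radon partition $S=A\dot\cup B$ would separate $\operatorname{conv}(A)$ from $\operatorname{conv}(B)$; for the lower bound, a non-Radon set of size $D-1$ is shattered via the first separation property. The only cosmetic difference is in the upper bound: you rule out shattered sets of size exactly $D$ and use that shattering is hereditary, whereas the paper argues that a shattered set has no Radon partition. Also, minimality of $D$ already says that $N=D-1$ fails the defining property, so it directly gives a non-Radon set of size $D-1$; no monotonicity is needed for that step.
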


\begin{proof}
First, suppose that $S\subseteq X$ is shattered by halfspaces. We claim that $S$ has no Radon partition. Indeed, for every partition $S=A\dot\cup B$, since $S$ is shattered, there is a halfspace $h$ such that $h\cap S=A$. Hence $\operatorname{conv}(A)\subseteq h$ and $\operatorname{conv}(B)\subseteq X\setminus h$, so $\operatorname{conv}(A)\cap \operatorname{conv}(B)=\emptyset$. Thus no partition of $S$ is a Radon partition. By the definition of the Radon number, this implies $|S|\le D-1$. Therefore $\operatorname{VCdim}(H_{\mathcal C})\le D-1$.

Conversely, by the minimality of the Radon number $D$, there exists a set $S\subseteq X$ of size $D-1$ with no Radon partition. We show that $S$ is shattered by halfspaces. Let $A\subseteq S$. Since $S$ has no Radon partition, the convex sets $\operatorname{conv}(A)$ and $\operatorname{conv}(S\setminus A)$ are disjoint. By separability, there exists a halfspace $h$ such that $\operatorname{conv}(A)\subseteq h$ and $h \cap \operatorname{conv}(S\setminus A)= \emptyset$.  Thus every subset $A\subseteq S$ is realized as the trace of a halfspace on $S$. Hence, $S$ is shattered, and so $\operatorname{VCdim}(H_{\mathcal C})\ge D-1$.
\end{proof}

\subsection{From colorful shattering to colorful Tverberg partitions}

Let $(X,\mathcal C)$ be a separable convexity space, and let $H_{\mathcal C}$ be its halfspace hypergraph. For $1\le k\le r$, define $\operatorname{CT}_{\mathcal C}(k,r)$ to be the smallest integer $\ell$ such that for every colored set $\mathcal P=(C_1,\ldots,C_\ell)$ with $|C_s|=r$ for all $s$, there exists a rainbow partition $C_1\cup\cdots\cup C_\ell=R_1\dot\cup\cdots\dot\cup R_r$ such that every $k$ of the convex hulls $\operatorname{conv}(R_1),\ldots,\operatorname{conv}(R_r)$ have a nonempty intersection.

\medskip

\noindent \textbf{Theorem \ref{thm:colorful-tverberg} -- a formal restatement} (Colorful $k$-wise Tverberg theorem).
Let $(X,\mathcal C)$ be a separable convexity space with Radon number $D$. Then $\operatorname{CT}_{\mathcal C}(k,r)\le O(kD\log(kDr))$. 

\begin{proof}
By Lemma~\ref{lem:vc-radon}, the halfspace hypergraph $H_{\mathcal C}$ has VC-dimension $D-1$. By Theorem~\ref{thm:intro-colorful-vc-bound}, no colored set with more than $O(kD\log(kDr))$ color classes can be colorfully $(k,r)$-shattered by $H_{\mathcal C}$.

Let $\mathcal P=(C_1,\ldots,C_\ell)$ be a colored set with $\ell$ larger than this bound. Since $\mathcal P$ is not colorfully $(k,r)$-shattered, there exists a rainbow partition $R_1,\ldots,R_r$ such that for every choice of distinct indices $i_1,\ldots,i_k\in[r]$ and every choice of halfspaces $h_{i_1},\ldots,h_{i_k}$ with $R_{i_j}\subseteq h_{i_j}$ for all $j$, we have $h_{i_1}\cap\cdots\cap h_{i_k}\neq\emptyset$.

We claim that every $k$ of the convex hulls $\operatorname{conv}(R_1),\ldots,\operatorname{conv}(R_r)$ intersect. Otherwise, for some distinct $i_1,\ldots,i_k$, the convex sets $K_j=\operatorname{conv}(R_{i_j})$, $j=1,\ldots,k$, have an empty intersection.
We now replace the convex sets $K_1,\ldots,K_k$ by halfspaces, one at a time, while preserving the empty-intersection property. First consider $K_1$. The set $B_1=K_2\cap\cdots\cap K_k$ is convex and disjoint from $K_1$. By the first separation property, there is a halfspace $h_{i_1}$ such that $K_1\subseteq h_{i_1}$ and $B_1\subseteq X\setminus h_{i_1}$. Hence $h_{i_1}\cap K_2\cap\cdots\cap K_k=\emptyset$, and of course $R_{i_1}\subseteq h_{i_1}$.

Next, we replace $K_2$. Now the other side is the convex set $B_2=h_{i_1}\cap K_3\cap\cdots\cap K_k$, which is disjoint from $K_2$ because $h_{i_1}\cap K_2\cap\cdots\cap K_k=\emptyset$. Applying the same separation property, we find a halfspace $h_{i_2}$ such that $K_2\subseteq h_{i_2}$ and $B_2\subseteq X\setminus h_{i_2}$. Thus, $h_{i_1}\cap h_{i_2}\cap K_3\cap\cdots\cap K_k=\emptyset$, while $R_{i_2}\subseteq h_{i_2}$.
Continuing in this way, replacing one remaining convex set by a separating halfspace at each step, we obtain halfspaces $h_{i_1},\ldots,h_{i_k}$ such that $R_{i_j}\subseteq h_{i_j}$ for all $j=1,\ldots,k$ and $h_{i_1}\cap\cdots\cap h_{i_k}=\emptyset$. This contradicts the choice of the rainbow partition. Hence, the partition has the desired $k$-wise intersection property.
\end{proof}

We now derive the usual, non-$k$-wise, colorful Tverberg consequence.

\begin{corollary}[Colorful Tverberg theorem, first tradeoff]\label{cor:colorful-full-tverberg}
Let $(X,\mathcal C)$ be a separable convexity space with Radon number $D$. For every $r>D$, $O(D^2\log(Dr))$ color classes, each of size $r$, are sufficient to force a rainbow partition into $r$ parts whose convex hulls have a common point.
\end{corollary}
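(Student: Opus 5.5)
The plan is to apply the formal restatement of Theorem~\ref{thm:colorful-tverberg} with $k=D-1$, and then pass from $(D-1)$-wise intersection to a common point using a Helly-type property of the space.

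First, the range of $k$. Since $r>D$, we have $1\le D-1\le r$, so $k=D-1$ is admissible in the definition of $\operatorname{CT}_{\mathcal C}(k,r)$. (If $D\le 2$ one may take $k=1$ or $k=2$ and the argument below is only easier.) Theorem~\ref{thm:colorful-tverberg} then gives
\[
\operatorname{CT}_{\mathcal C}(D-1,r)\le O\bigl((D-1)D\log((D-1)Dr)\bigr)=O(D^2\log(Dr)).
\]
For the simplification of the logarithm, note that $\log(D^2 r)\le 2\log(Dr)$. Hence, given $\ell\ge C D^2\log(Dr)$ color classes of size $r$, there is a rainbow partition $R_1,\ldots,R_r$ in which every $D-1$ of the hulls $\operatorname{conv}(R_i)$ intersect.

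Second, we invoke Levi's theorem~\cite{Levi51}: in a convexity space with Radon number $D$, the Helly number is at most $D-1$. The family $\{\operatorname{conv}(R_1),\ldots,\operatorname{conv}(R_r)\}$ is a finite family of convex sets, and every $D-1$ of its members have a common point. Therefore all $r$ members have a common point, which is exactly the required colorful Tverberg partition.

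The main point to check is that Levi's theorem applies as stated, i.e., with Helly number at most $D-1$ under the paper's Radon-number convention (where $D$ is the size threshold, so $D=d+2$ in $\mathbb{R}^d$). Levi's argument is as follows. Take convex sets $K_1,\ldots,K_m$ with $m\ge D$ such that every $m-1$ of them intersect, and pick $x_j\in\bigcap_{i\ne j}K_i$. Among these $m\ge D$ points there is a Radon partition $I\dot\cup J$. Any point $y\in\operatorname{conv}(\{x_j:j\in I\})\cap\operatorname{conv}(\{x_j:j\in J\})$ lies in every $K_i$: if $i\notin I$, then all $x_j$ with $j\in I$ lie in $K_i$, so $\operatorname{conv}(\{x_j:j\in I\})\subseteq K_i$; otherwise $i\notin J$ and the same argument applies to $J$. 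If the points $x_j$ are not distinct, the conclusion is immediate. Induction on $m$ then gives Helly number at most $D-1$, which uses only closure under intersections.
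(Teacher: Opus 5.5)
Your proposal is correct and takes the same approach as the paper: apply the colorful $k$-wise Tverberg theorem with $k=D-1$, then use Levi's theorem (Helly number at most $D-1$) to upgrade $(D-1)$-wise intersection of the hulls to a common point. Your extra checks, namely the re-derivation of Levi's bound under the paper's Radon-number convention and the simplification $\log(D^2r)\le 2\log(Dr)$, are valid; the paper simply cites Levi and leaves the logarithm simplification implicit.
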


\begin{proof}
Apply Theorem~\ref{thm:colorful-tverberg} with $k=D-1$. This gives a rainbow partition whose convex hulls are $(D-1)$-wise intersecting. By Levi's theorem~\cite{Levi51}, the Helly number of $(X,\mathcal C)$ is at most $D-1$. Therefore all $r$ convex hulls have a nonempty intersection.
\end{proof}

For the results in Section \ref{sec:selection}, we shall also use the colorful theorem with the number of parts equal to an upper bound on the fractional Helly number. Put $d=2^D$ and $h=d+1$. Recall that by \cite{HolmsenP24}, $h$ is an upper bound on the fractional Helly number of $(X,\mathcal C)$.

\begin{corollary}[A colorful restricted Tverberg theorem]\label{cor:colorful-restricted-tverberg}
Let $(X,\mathcal C)$ be a separable convexity space with Radon number $D$. Put $d=2^D$ and $h=d+1$. There exists an integer $\ell=\ell(D)=O(D^3)$ such that the following holds. For every colored set $\mathcal P=(C_1,\ldots,C_\ell)$ with $|C_s|=h$ for all $s$, there exists a rainbow partition $C_1\cup\cdots\cup C_\ell=R_1\dot\cup\cdots\dot\cup R_h$ such that $\bigcap_{i=1}^h \operatorname{conv}(R_i)\neq\emptyset$.
\end{corollary}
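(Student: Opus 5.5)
The plan is to apply Theorem~\ref{thm:colorful-tverberg} in its formal form, $\operatorname{CT}_{\mathcal C}(k,r)\le O(kD\log(kDr))$, with $r=h=2^D+1$ parts and $k=D-1$, and then upgrade $(D-1)$-wise intersection to a full $h$-fold intersection using Levi's theorem, as in Corollary~\ref{cor:colorful-full-tverberg}.

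First I check the parameter ranges. Theorem~\ref{thm:colorful-tverberg} needs $1\le k\le r$ and $r\ge 2$. For $D\ge 2$ we have $1\le D-1\le 2^D+1=h$, so the choice is admissible. The degenerate case $D\le 2$ (where $k=D-1$ could be $\le 1$) is handled separately: if $D\le 2$, the Helly number is at most $1$, so every family of nonempty convex sets has a common point and any rainbow partition works. Alternatively, one can take $k=\max(D-1,2)$, which still satisfies $k\le h$, and Levi's bound also covers $k\ge$ Helly number.

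Next I compute the bound. With $k=D-1$ and $r=h=2^D+1$,
\[
kD\log(kDr)=O\bigl(D^2\log(D^2 2^D)\bigr)=O\bigl(D^2\cdot D\bigr)=O(D^3),
\]
since $\log(2^D)=D\log 2$ dominates $\log D^2$. So there is an $\ell(D)=O(D^3)$ with $\ell(D)\ge \operatorname{CT}_{\mathcal C}(D-1,h)$. Since the theorem is stated as a threshold "$\ell\ge CkD\log(kDr)$", any larger number of classes also works, so we may define $\ell(D)$ to be exactly this expression rounded up.

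Finally, apply the theorem to a colored set with $\ell(D)$ classes, each of size $h$. This yields a rainbow partition $R_1,\dots,R_h$ whose convex hulls are $(D-1)$-wise intersecting. By Levi's theorem, the Helly number is at most $D-1$, so $\bigcap_{i=1}^h\operatorname{conv}(R_i)\neq\emptyset$. There is no real obstacle here. The only point needing care is the logarithm estimate $\log(kDr)=O(D)$ when $r$ is exponential in $D$. This is exactly where the $O(D^3)$ comes from, as opposed to $O(D^2)$, and it is what makes the choice of $h$ through the Holmsen--Pat\'akov\'a bound affordable.
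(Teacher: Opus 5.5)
Your proposal is correct and is essentially the paper's proof: apply Theorem~\ref{thm:colorful-tverberg} with $r=h=2^D+1$ and $k=D-1$, use $\log(D^2 2^D)=O(D)$ to get $\ell(D)=O(D^3)$, and then pass from $(D-1)$-wise to full intersection via Levi's theorem. Your separate handling of $D\le 2$ is harmless but not needed, because $k=D-1=1$ already satisfies the hypothesis $1\le k\le r$ of the theorem.
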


\begin{proof}
Apply Theorem~\ref{thm:colorful-tverberg} with $r=h=d+1$ and $k=D-1$. Since $d=2^D$, we have
\[
O(kD\log(kDh))=O(D^2\log(D^2 2^D))=O(D^3).
\]
Thus $\ell=O(D^3)$ color classes suffice to obtain a rainbow partition whose convex hulls are $(D-1)$-wise intersecting. By Levi's theorem~\cite{Levi51}, the Helly number of $(X,\mathcal C)$ is at most $D-1$. Therefore the whole family of $h$ convex hulls has nonempty intersection.
\end{proof}

\subsection{The uncolored consequences}

We now deduce the uncolored $k$-wise Tverberg theorem from the colorful theorem by partitioning an arbitrary point set into color classes, each of size $r$, and then forgetting the colors.

\medskip

\noindent \textbf{Theorem~\ref{thm:k-wise-tverberg} -- restatement} ($k$-wise Tverberg Theorem).
Let $(X,\mathcal C)$ be a separable convexity space with Radon number $D$. Then every finite set $P\subseteq X$ of size at least $O(kDr\log(Dr))$ admits a partition $P=P_1\dot\cup\cdots\dot\cup P_r$ such that every $k$ of the convex hulls $\operatorname{conv}(P_1),\ldots,\operatorname{conv}(P_r)$ have nonempty intersection.

\medskip

\begin{proof}
Let $\ell=O(kD\log(kDr))$ be large enough for Theorem~\ref{thm:colorful-tverberg}. Suppose $|P|\ge \ell r$. Choose pairwise disjoint subsets $C_1,\ldots,C_\ell\subseteq P$, each of size $r$, and let $S=C_1\cup\cdots\cup C_\ell$. Applying Theorem~\ref{thm:colorful-tverberg} to the colored set $(C_1,\ldots,C_\ell)$, we obtain a rainbow partition $S=R_1\dot\cup\cdots\dot\cup R_r$ such that every $k$ of the convex hulls $\operatorname{conv}(R_1),\ldots,\operatorname{conv}(R_r)$ have nonempty intersection.
Now add the remaining points of $P\setminus S$ arbitrarily to the parts $R_1,\ldots,R_r$, obtaining the desired partition $P=P_1\dot\cup\cdots\dot\cup P_r$ with $R_i\subseteq P_i$ for every $i$. Since $\ell r=O(kDr\log(kDr))$ and $k\le r$ implies $\log(kDr)=O(\log(Dr))$, this proves the theorem.
\end{proof}

We finally deduce the improved Tverberg bound.

\medskip

\noindent \textbf{Theorem~\ref{thm:improved-tverberg} -- restatement} (The Tverberg Theorem).
Let $(X,\mathcal C)$ be a separable convexity space with Radon number $D$. Then, for every $r>D$, $\operatorname{Tv}_{\mathcal C}(r)\le O(D^2r\log r)$.

\medskip

\begin{proof}
Apply Theorem~\ref{thm:k-wise-tverberg} with $k=D-1$. 
By Levi's theorem~\cite{Levi51}, the Helly number of $(X,\mathcal C)$ is at most $D-1$. Therefore the whole family $\operatorname{conv}(P_1),\ldots,\operatorname{conv}(P_r)$ has a nonempty intersection. 
\end{proof}

\section{Colorful and uncolored selection lemmas}\label{sec:selection}

In this section we use the colorful restricted Tverberg theorem to prove a colorful selection lemma with $O(D^3)$ colors. We then derive the improved uncolored selection lemma. Interestingly, working directly with the uncolored result would imply much weaker bounds in the selection lemma that we obtain.

Throughout this section, let $d=2^D$ and $h=d+1$, and let $\ell=\ell(D)=O(D^3)$ be the integer from Corollary~\ref{cor:colorful-restricted-tverberg}. Let $\beta_d$ denote a fractional Helly function for $(X,\mathcal C)$ with fractional Helly number at most $h=d+1$.

\subsection{The selection lemmas}

\begin{theorem}[Colorful selection lemma]\label{thm:colorful-selection}
Let $(X,\mathcal C)$ be a separable convexity space with Radon number $D$. Then there exist an integer $\ell=\ell(D)=O(D^3)$ and a constant $\lambda_{\mathrm{col}}>0$, depending only on $D$, such that the following holds. Let $P_1,\ldots,P_\ell\subseteq X$ be pairwise disjoint finite sets with $|P_1|=\cdots=|P_\ell|=n\ge h$. Then there exists a point $x\in X$ such that
\[
\bigl|\{(p_1,\ldots,p_\ell)\in P_1\times\cdots\times P_\ell:
 x\in \operatorname{conv}\{p_1,\ldots,p_\ell\}\}\bigr|
\ge
\lambda_{\mathrm{col}} n^\ell.
\]
\end{theorem}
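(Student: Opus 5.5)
This follows the standard Alon--Kalai--Matou\v{s}ek--Meshulam scheme: fractional Helly applied to the family of rainbow hulls, with Corollary~\ref{cor:colorful-restricted-tverberg} supplying the many intersecting $h$-tuples. Take $\ell=\ell(D)=O(D^3)$ from Corollary~\ref{cor:colorful-restricted-tverberg}. Let $\mathcal F$ be the family of convex sets $K_\tau=\operatorname{conv}\{p_1,\ldots,p_\ell\}$ indexed by the $n^\ell$ transversals $\tau=(p_1,\ldots,p_\ell)\in P_1\times\cdots\times P_\ell$. We treat $\mathcal F$ as a multiset, or note that fractional Helly applies equally to indexed families. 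The goal is to show that a positive fraction $\alpha=\alpha(D)$ of the $h$-tuples of members of $\mathcal F$ have nonempty intersection. The fractional Helly theorem of Holmsen--Pat\'akov\'a, with fractional Helly number at most $h$, then yields a point $x$ lying in at least $\beta_d(\alpha)n^\ell$ of the sets $K_\tau$. This gives the statement with $\lambda_{\mathrm{col}}=\beta_d(\alpha)$.

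To find many intersecting $h$-tuples, I would count via ``rainbow blocks''. Choose, for each $s\in[\ell]$, an $h$-element subset $C_s\subseteq P_s$. There are $\binom{n}{h}^\ell$ such choices, which is possible since $n\ge h$. Corollary~\ref{cor:colorful-restricted-tverberg} applied to $(C_1,\ldots,C_\ell)$ gives a rainbow partition $R_1,\ldots,R_h$ whose hulls share a point. Each $R_i$ is a transversal $\tau_i$, and the $\tau_i$ are pairwise disjoint, so $\{K_{\tau_1},\ldots,K_{\tau_h}\}$ is an intersecting $h$-tuple of distinct members of $\mathcal F$.

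The map from blocks to $h$-tuples is injective, because the block $(C_1,\ldots,C_\ell)$ is recovered from the tuple as the union of its transversals, color by color. Hence at least $\binom{n}{h}^\ell$ intersecting $h$-tuples exist. The total number of $h$-tuples is $\binom{n^\ell}{h}\le n^{\ell h}/h!$, while $\binom{n}{h}^\ell\ge (n/h)^{h\ell}$. The fraction of intersecting tuples is therefore at least $\alpha=h!\,h^{-h\ell}>0$, which depends only on $D$.

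The main point to check is that the fractional Helly theorem is applied to $h$-tuples of distinct members and that our tuples are genuinely distinct. The injectivity above handles the second issue: tuples coming from different blocks differ, and within one block the $h$ transversals are disjoint, hence distinct indices. If the fractional Helly theorem requires a set family rather than a multiset, I would apply it to the indexed family, with sets counted with multiplicity. Its proof via the dual VC-dimension is insensitive to repeated sets, or one can perturb by indexing. No part of the argument depends on $n$ beyond $n\ge h$, so $\lambda_{\mathrm{col}}$ depends only on $D$.
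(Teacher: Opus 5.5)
Your proposal is correct and matches the paper's proof essentially step for step. Both use the labelled multiset of the $n^\ell$ rainbow hulls, obtain $\binom{n}{h}^\ell$ intersecting $h$-tuples from Corollary~\ref{cor:colorful-restricted-tverberg} (with injectivity because each tuple determines its blocks $C_s$), and then apply the Holmsen--Pat\'akov\'a fractional Helly theorem. The only difference is cosmetic: your lower bound $h!\,h^{-h\ell}$ on the fraction of intersecting tuples is marginally sharper than the paper's $e^{-h}h^{-h(\ell-1)}$, since $h!\ge (h/e)^h$, and both depend only on $D$.
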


\begin{proof}
Consider the finite family, with multiplicities,
\[
\mathcal F=
\left\{
\operatorname{conv}\{p_1,\ldots,p_\ell\}:p_i\in P_i\text{ for all }i\in[\ell]
\right\}.
\]
Here each colorful choice $(p_1,\ldots,p_\ell)$ labels a separate copy of the corresponding convex set. Thus $\mathcal F$ is a multiset of cardinality $n^\ell$, and its $h$-tuples are $h$-element subfamilies of these labelled copies.

We show that a positive fraction, depending only on $D$, of all $h$-element subfamilies of labelled copies in $\mathcal F$ have a nonempty intersection. Choose subsets $Q_i\subseteq P_i$ with $|Q_i|=h$ for all $i=1,\ldots,\ell$. There are $\binom{n}{h}^\ell$ choices. For each such choice, Corollary~\ref{cor:colorful-restricted-tverberg} applied to the color classes $Q_1,\ldots,Q_\ell$ gives $h$ pairwise disjoint colorful sets
$A_1,\ldots,A_h$, each containing one point from each $Q_i$, such that
$\bigcap_{j=1}^h \operatorname{conv}(A_j)\neq\emptyset.$
Therefore each choice of $Q_1,\ldots,Q_\ell$ produces an intersecting $h$-tuple of members of $\mathcal F$.

Conversely, any $h$ pairwise disjoint colorful sets determine the corresponding sets $Q_i$ uniquely, namely by taking the $h$ points of color $i$ which appear in them. Hence the number of intersecting $h$-tuples in $\mathcal F$ is at least $\binom{n}{h}^\ell$. Consequently, the fraction of intersecting $h$-tuples is at least
\[
\alpha_n=\frac{\binom{n}{h}^\ell}{\binom{n^\ell}{h}}.
\]
Using $\binom{n}{h}\ge (n/h)^h$ and $\binom{n^\ell}{h}\le (e n^\ell/h)^h$, we get the uniform lower bound
\[
\alpha_n\ge e^{-h}h^{-h(\ell-1)}=:\alpha_{\mathrm{col}}>0.
\]

Since $h=d+1$ is a fractional Helly number for the space, the fractional Helly theorem applied to $\mathcal F$ gives a point $x\in X$ contained in at least
$\beta_d(\alpha_{\mathrm{col}})|\mathcal F|
=
\beta_d(\alpha_{\mathrm{col}}) n^\ell$
members of $\mathcal F$. Thus the theorem holds with
$\lambda_{\mathrm{col}}=\beta_d(\alpha_{\mathrm{col}}).$
\end{proof}

Here we derive:

\medskip

\noindent \textbf{Theorem~\ref{thm:selection_lemma} -- restatement} (A selection lemma).
Let $(X,\mathcal C)$ be a separable abstract convexity space with Radon number $D$. Then there exist an integer $a=a(D)=O(D^3)$ and a constant $\lambda>0$, depending only on $D$, such that the following holds. For every finite set $P\subseteq X$ of size $n\ge a$, there exists a point $x\in X$ that belongs to the convex hulls of at least $\lambda\binom{n}{a}$ subsets $A\subseteq P$ of size $a$. 

\begin{proof}
Let $\ell=\ell(D)$ be the number of colors from Theorem~\ref{thm:colorful-selection}. We first assume that $n\ge ah$. Put $s=\lfloor n/a\rfloor$. Then $s\ge h$, and after discarding at most $a-1$ points of $P$ we may choose pairwise disjoint subsets $P_1,\ldots,P_a\subseteq P$ with
$|P_1|=\cdots=|P_a|=s$.

Applying Theorem~\ref{thm:colorful-selection} to $P_1,\ldots,P_a$, we find a point $x\in X$ that belongs to the convex hulls of at least $\lambda_{\mathrm{col}}s^a$ colorful $a$-tuples. Since the color classes are disjoint, each such colorful tuple is an $a$-element subset of $P$. Thus
\[
\bigl|\{A\in\binom{P}{a}:x\in\operatorname{conv}(A)\}\bigr|
\ge
\lambda_{\mathrm{col}}s^a.
\]
As $n\ge ah$ and $h\ge 2$, we have $s=\lfloor n/a\rfloor\ge n/(2a)$. Hence
\[
\lambda_{\mathrm{col}}s^a
\ge
\lambda_{\mathrm{col}}\left(\frac{n}{2a}\right)^a
\ge
\lambda_{\mathrm{col}}\frac{a!}{(2a)^a}\binom{n}{a}.
\]

For the remaining finitely many values $a\le n<ah$, we decrease the constant if necessary.
\end{proof}

\subsection{A colored Tverberg consequence with fewer colors}

We record one further consequence of the colorful selection lemma. It gives a colored Tverberg-type theorem in which the number of color classes depends only polynomially on the Radon number, while the size of each color class is allowed to be larger than the number of desired parts.
The derivation of a colored Tverberg-type statement from a selection lemma is a standard technique. Alon, B\'ar\'any, F\"uredi and Kleitman~\cite{AlonBFK92} observed in the Euclidean setting that point selection, hitting-set, and multicolored Tverberg theorems are closely related, indeed equivalent up to changes in the parameters. 

\medskip

\noindent \textbf{Theorem~\ref{thm:large-color-classes-colored-tverberg} -- restatement} (A colorful Tverberg Theorem -- second tradeoff).
Let $(X,\mathcal C)$ be a separable convexity space with Radon number $D$. Then there exist an integer $\ell_0=\ell_0(D)=O(D^3)$ and a constant $C_D>0$ such that the following holds. Let $r\ge 1$, and let $P_1,\ldots,P_{\ell_0}\subseteq X$ be $\ell_0$ color classes satisfying $|P_i|\ge C_Dr$ for every $i\in[\ell_0]$. Then there exist pairwise disjoint rainbow sets $A_1,\ldots,A_r$, each containing exactly one point from each color class $P_i$, such that $\bigcap_{j=1}^r \operatorname{conv}(A_j)\neq\emptyset$.

\begin{proof}
Let $\ell_0=\ell_0(D)=O(D^3)$ and $\lambda_{\mathrm{col}}>0$ be the constants given by the colorful selection lemma. Thus, for any $\ell_0$ color classes of the same size $M$, there exists a point $x\in X$ that belongs to the convex hulls of at least $\lambda_{\mathrm{col}}M^{\ell_0}$ colorful choices.

Choose
\(
C_D\ge \max\left\{h,\frac{2\ell_0}{\lambda_{\mathrm{col}}}+1\right\}.
\)
Let $P_1,\ldots,P_{\ell_0}$ be color classes with $|P_i|\ge C_Dr$. Put
\(  
M=\max\left\{h,\left\lceil \frac{2\ell_0r}{\lambda_{\mathrm{col}}}\right\rceil\right\}.
\)
Then $M\le C_Dr\le |P_i|$ for every $i\in[\ell_0]$.

Applying the colorful selection lemma to $P_1',\ldots,P_{\ell_0}'$, we obtain a point $x\in X$ such that $x\in \operatorname{conv}{p_1,\ldots,p_{\ell_0}}$ for at least $\lambda_{\mathrm{col}}M^{\ell_0}$ choices $(p_1,\ldots,p_{\ell_0})\in P_1'\times\cdots\times P_{\ell_0}'$.

Define an $\ell_0$-partite $\ell_0$-uniform hypergraph $\mathcal H_x$ with vertex classes $P_1',\ldots,P_{\ell_0}'$. Its edges are the colorful sets ${p_1,\ldots,p_{\ell_0}}$, where $p_i\in P_i'$, such that $x\in \operatorname{conv}\{p_1,\ldots,p_{\ell_0}\}$. By the choice of $x$, this hypergraph has at least $\lambda_{\mathrm{col}}M^{\ell_0}$ edges.

We claim that $\mathcal H_x$ contains a matching of size $r$. Suppose not. Let $\mathcal M$ be a maximal matching in $\mathcal H_x$. Then $|\mathcal M|\le r-1$, and the union of the edges of $\mathcal M$ contains at most $\ell_0(r-1)$ vertices. By maximality, every edge of $\mathcal H_x$ must meet this union; otherwise it could be added to $\mathcal M$. On the other hand, each vertex is contained in at most $M^{\ell_0-1}$ colorful edges. Therefore $|E(\mathcal H_x)|\le \ell_0(r-1)M^{\ell_0-1}$. This contradicts the lower bound $|E(\mathcal H_x)|\ge \lambda_{\mathrm{col}}M^{\ell_0}$, because our choice of $M$ gives $\lambda_{\mathrm{col}}M>\ell_0(r-1)$. Hence $\mathcal H_x$ contains a matching of size $r$.

Let $A_1,\ldots,A_r$ be the edges of such a matching. They are pairwise disjoint rainbow sets, each containing one point from each color class. Moreover, by the definition of $\mathcal H_x$, we have $x\in\operatorname{conv}(A_j)$ for every $j=1,\ldots,r$. Hence $x\in \bigcap_{j=1}^r \operatorname{conv}(A_j)$, as required.
\end{proof}

\section{Weak \texorpdfstring{$\varepsilon$}{epsilon}-nets}\label{sec:epsnet}

In this section we derive the improved weak $\varepsilon$-net theorem from the improved selection lemma. The argument is the standard greedy argument, as in~\cite{AlonBFK92}.

\medskip
\noindent \textbf{Theorem~\ref{thm:weak-epsilon-nets} -- restatement.}
Let $(X,\mathcal C)$ be a separable convexity space with Radon number $D$. Then there exist $a=a(D)=O(D^3)$ and a constant $C_D>0$, depending only on $D$, such that the following holds. For every finite set $P\subseteq X$ and every $0<\varepsilon\le 1$, there exists a weak $\varepsilon$-net $N\subseteq X$ for $P$ with $|N|\le C_D\varepsilon^{-a}$.

\medskip

\begin{proof}
Let $a=a(D)=O(D^3)$ and $\lambda>0$ be the constants from Theorem~\ref{thm:selection_lemma}. Let $P\subseteq X$ be finite, and write $n=|P|$.
If $\varepsilon n<a$, then $N=P$ is a weak $\varepsilon$-net and
$|N|=n<a\varepsilon^{-1}\le a\varepsilon^{-a}$. Thus we may assume that $s:=\lceil\varepsilon n\rceil\ge a$.

We construct $N$ greedily. At any stage, call a subset $A\in\binom{P}{a}$ \emph{alive} if $N\cap\operatorname{conv}(A)=\emptyset$. Initially all $a$-subsets are alive. Suppose that the current set $N$ is not a weak $\varepsilon$-net for $P$. Then there exists $Q\subseteq P$ with $|Q|\ge s$ such that $N\cap\operatorname{conv}(Q)=\emptyset$. By the selection lemma applied to $Q$, there is a point $x\in X$ such that $x\in\operatorname{conv}(A)$ for at least $\lambda\binom{|Q|}{a}\ge\lambda\binom{s}{a}$ subsets $A\in\binom{Q}{a}$.

All these $a$-subsets are alive before $x$ is added to $N$, because $\operatorname{conv}(A)\subseteq\operatorname{conv}(Q)$ and $N\cap\operatorname{conv}(Q)=\emptyset$. After adding $x$, they are no longer alive. Thus each step kills at least $\lambda\binom{s}{a}$ alive $a$-subsets. Since the initial number of alive $a$-subsets is $\binom{n}{a}$, the number of greedy steps is at most
$\lambda^{-1}\frac{\binom{n}{a}}{\binom{s}{a}}.$
When the procedure stops, $N$ is a weak $\varepsilon$-net.

Using again $\binom{n}{a}\le (en/a)^a$, $\binom{s}{a}\ge (s/a)^a$, and $s\ge\varepsilon n$, we obtain
\[
\frac{\binom{n}{a}}{\binom{s}{a}}
\le
\left(\frac{en}{s}\right)^a
\le
\left(\frac{e}{\varepsilon}\right)^a.
\]
Hence $|N|\le \lambda^{-1}e^a\varepsilon^{-a}$. Combining this with the trivial case above gives the result with $C_D=\max\{a,\lambda^{-1}e^a\}$.
\end{proof}

\begin{remark}
Theorem~\ref{thm:weak-epsilon-nets} gives weak $\varepsilon$-nets of size $O_D(\varepsilon^{-a})$, now with $a=O(D^3)$. The constant hidden in $O_D(\cdot)$ still depends on the fractional Helly function through the constant $\lambda$ in the selection lemma.
\end{remark}

\section{A quantitative \texorpdfstring{$(p,q)$}{(p,q)}-theorem}\label{sec:pq}

We now derive the improved quantitative $(p,q)$-theorem. The proof follows the classical Alon--Kleitman \cite{AlonK92} scheme: first one finds a positive fraction of intersecting subfamilies, then applies fractional Helly, then uses linear programming duality, and finally applies the weak $\varepsilon$-net theorem.

Let $(X,\mathcal C)$ be a separable convexity space with Radon number $D$. As above, put $d=2^D$ and $h=d+1$, so that $h$ is an upper bound on the fractional Helly number of the space (as was proved in \cite{HolmsenP24}). Let $\beta_d(\alpha)$ denote a corresponding fractional Helly function.

\medskip
\noindent \textbf{Theorem~\ref{thm:pq-separable} -- restatement.}
Let $(X,\mathcal C)$ be a separable convexity space with Radon number $D$, and put $d=2^D$ and $h=d+1$. Let $a=a(D)=O(D^3)$ and $C_D$ be as in Theorem~\ref{thm:weak-epsilon-nets}. Then for every $p\ge q\ge h$, every finite family $\mathcal F\subseteq\mathcal C$ of nonempty convex sets satisfying the $(p,q)$-property admits a transversal of size at most
\[
C_D\left(\beta_d\left(\frac{\binom{q}{h}}{\binom{(p-1)(q-1)+1}{h}}\right)\right)^{-a}.
\]

\begin{proof}
Let $p'=(p-1)(q-1)+1$, and set
\[
\alpha=\frac{\binom{q}{h}}{\binom{p'}{h}},
\qquad
\gamma=\beta_d(\alpha).
\]
We first show that for every finite multiset $\mathcal F'$ whose elements are members of $\mathcal F$, there exists a point contained in at least a $\gamma$-fraction of the members of $\mathcal F'$, counted with multiplicity. By multiplying all multiplicities by the same integer if necessary, we may assume that $n=|\mathcal F'|\ge p'$.

The multiset $\mathcal F'$ satisfies the $(p',q)$-property in the following sense. Among any $p'$ members of $\mathcal F'$, either there are $p$ distinct original members of $\mathcal F$, in which case some $q$ of them intersect by the $(p,q)$-property of $\mathcal F$, or one original member appears at least $q$ times, in which case those $q$ copies clearly intersect.

Let $M_h$ be the number of intersecting $h$-tuples of members of $\mathcal F'$. We count pairs $(I,J)$, where $J$ is a $p'$-tuple of members of $\mathcal F'$, and $I\subseteq J$ is an intersecting $h$-tuple. Each $J$ contains at least $\binom{q}{h}$ such $h$-tuples, while each fixed $h$-tuple is contained in at most $\binom{n-h}{p'-h}$ choices of $J$. Therefore
\[
M_h\binom{n-h}{p'-h}
\ge
\binom{n}{p'}\binom{q}{h}.
\]
Dividing by $\binom{n}{h}$ and using
$\binom{n}{p'}\binom{p'}{h}=\binom{n}{h}\binom{n-h}{p'-h}$, we get
\[
\frac{M_h}{\binom{n}{h}}
\ge
\frac{\binom{q}{h}}{\binom{p'}{h}}
=
\alpha.
\]
By the fractional Helly theorem, there is a point $x\in X$ that belongs to at least $\gamma n$ members of $\mathcal F'$, counted with multiplicity.

By the standard linear programming duality lemma used in the Alon--Kleitman proof, for example in the form stated in~\cite[Lemma~2.4]{KellerST17}, it follows that there exists a finite multiset $Y\subseteq X$ such that every $F\in\mathcal F$ contains at least $\gamma |Y|$ points of $Y$, counted with multiplicity.

Apply Theorem~\ref{thm:weak-epsilon-nets} to the multiset $Y$, with $\varepsilon=\gamma$. The proof of the weak $\varepsilon$-net theorem applies to multisets by treating repeated points as labelled copies. We obtain a weak $\gamma$-net $N\subseteq X$ for $Y$ of size at most $C_D\gamma^{-a}$. Since every $F\in\mathcal F$ is convex and contains at least $\gamma |Y|$ points of $Y$, the definition of a weak $\gamma$-net implies that $N\cap F\neq\emptyset$. Thus $N$ is a transversal for $\mathcal F$, and
\[
\tau(\mathcal F)
\le
C_D\gamma^{-a}
=
C_D\left(\beta_d\left(\frac{\binom{q}{h}}{\binom{(p-1)(q-1)+1}{h}}\right)\right)^{-a}.
\]
This completes the proof.
\end{proof}

\begin{remark}
The theorem is stated in terms of the fractional Helly function $\beta_d$, since in this level of generality the dependence of $\beta_d(\alpha)$ on $\alpha$ is part of the quantitative input. 
\end{remark}

\section{A colorful Tverberg theorem for unions of convex sets}\label{sec:s-convex}

We now explain how the same colorful shattering argument yields a colored Tverberg theorem for unions of convex sets. We state and use two auxiliary lemmas from~\cite{AlonS26} in a slightly more general form, suitable for separable convexity spaces. Their proofs are the same as in the Euclidean setting, after replacing Euclidean halfspaces by halfspaces in the convexity space and using Lemma~\ref{lem:vc-radon}.

Let $(X,\mathcal C)$ be a convexity space. A set $F\subseteq X$ is called $s$-convex if it is the union of at most $s$ members of $\mathcal C$. The natural Tverberg-type conclusion for $s$-convex sets is not stated in terms of convex hulls, but rather in the following equivalent spirit: the parts should be chosen so that any $s$-convex sets containing them have a nonempty intersection.

We shall also use the following terminology. A $\mathcal C$-polyhedron with at most $t$ facets is an intersection of at most $t$ halfspaces of $(X,\mathcal C)$. Thus a $\mathcal C$-polyhedron is convex, but need not be bounded in any sense.

\begin{lemma}(\cite[Abstract form of Lemma~3.2]{AlonS26})\label{lem:AS-vc-polytopal-unions}
Let $(X,\mathcal C)$ be a separable convexity space with Radon number $D$. Let $a,t\ge 1$ be integers and put $L=at$. Let $P\subseteq X$ be finite, and let $H=(P,E)$ be the hypergraph in which $S\subseteq P$ is a hyperedge if and only if there exist $\mathcal C$-polyhedra $K_1,\ldots,K_a$, each having at most $t$ facets, such that $S=P\cap\left(\bigcup_{j=1}^a K_j\right)$. Then the VC-dimension of $H$ is bounded by $O(DL\log(L+1))$.
\end{lemma}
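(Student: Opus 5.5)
We bound the shatter function of $H$ and then convert it into a VC-dimension bound in the usual way. The base system is the halfspace hypergraph $H_{\mathcal C}$ restricted to $P$. By Lemma~\ref{lem:vc-radon} its VC-dimension is $D-1$. The Perles--Sauer--Shelah lemma then gives that for every $Y\subseteq P$ with $|Y|=m$, the number of traces of halfspaces on $Y$ is at most $\sum_{i=0}^{D-1}\binom{m}{i}\le (em)^{D}$ for $m\ge D$, and at most $2^m$ in general.

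Next, let $Y\subseteq P$ have size $m$, and count the traces on $Y$ of the hyperedges of $H$. Each hyperedge has the form $S=P\cap\bigcup_{j=1}^a\bigcap_{u=1}^t h_{j,u}$ with halfspaces $h_{j,u}$; polyhedra with fewer facets are padded with the halfspace $X$. Its trace is determined by the traces $h_{j,u}\cap Y$:
\[
S\cap Y=\bigcup_{j=1}^a\bigcap_{u=1}^t (h_{j,u}\cap Y).
\]
The trace of $S$ is therefore a fixed Boolean function of an ordered $L$-tuple of halfspace traces, where $L=at$. So the number of distinct traces of $H$ on $Y$ is at most $\pi_{H_{\mathcal C}}(m)^{L}\le (em)^{DL}$. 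Note that $X$ is indeed a halfspace, since $\emptyset\in\mathcal C$, so the padding is legitimate.

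Now suppose $Y$ is shattered by $H$ and $|Y|=m$. Then $2^m\le (em)^{DL}$, that is, $m\le DL\log_2(em)$. This is the inequality $m\le A\log(Bm)$ with $A=DL$ and $B=e$, so the standard estimate used in the proof of Theorem~\ref{thm:intro-colorful-vc-bound} gives $m=O(DL\log(DL))$.

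The main obstacle is that the lemma asserts $O(DL\log(L+1))$, without $\log D$ inside. To remove it, I would avoid the Sauer bound in the form $(em)^D$ and instead use $\binom{m}{\le D-1}\le (em/D)^{D}$ for $m\ge D$. This yields
\[
2^m\le (em/D)^{DL}.
\]
Writing $m=DLx$, this becomes $2^{x}\le eLx$, which forces $x=O(\log(L+1))$ and hence $m=O(DL\log(L+1))$. In the degenerate range $m<D$ the claim is trivial. The remaining care is in these edge cases, and in the observation that restricting from $X$ to the finite ground set $P$ does not increase VC-dimension.
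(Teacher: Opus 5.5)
Your proposal is correct and is essentially the paper's argument. The paper likewise treats each hyperedge as a fixed Boolean combination ($a$ disjunction terms, each a conjunction of at most $t$ halfspaces, so $L=at$ occurrences) of members of the halfspace hypergraph, which has VC-dimension $D-1$ by Lemma~\ref{lem:vc-radon}. It then invokes the standard VC bound for such combinations, and that bound is exactly your comparison of $\pi(m)^L$ with $2^m$. Your refinement via $\sum_{i\le D-1}\binom{m}{i}\le (em/D)^D$, leading to $2^x\le eLx$, correctly supplies the $\log(L+1)$ (rather than $\log(DL)$) dependence that the paper leaves to the citation of \cite[Lemma~3.2]{AlonS26}.
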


\begin{proof}
By Lemma~\ref{lem:vc-radon}, the halfspace hypergraph of $(X,\mathcal C)$ has VC-dimension $D-1$. The family in the statement is obtained from the halfspace family by Boolean formulas consisting of $a$ disjunction terms, each of which is a conjunction of at most $t$ halfspaces. Thus the total number of halfspace occurrences is at most $L=at$. The standard VC-dimension bound for Boolean combinations of a VC class, applied as in \cite[Lemma~3.2]{AlonS26}, gives VC-dimension $O(DL\log(L+1))$.
\end{proof}

\begin{lemma}(\cite[Abstract form of Lemma~4.1]{AlonS26})\label{lem:AS-sconvex-polytopal-enlargement}
Let $(X,\mathcal C)$ be a separable convexity space. Let $q\ge 1$, and let $F_1,\ldots,F_q\subseteq X$ be $q$ sets, each of which is $s$-convex. Assume that $F_1\cap\cdots\cap F_q=\emptyset$. Then there exist sets $K_1,\ldots,K_q$ such that $F_i\subseteq K_i$ for all $i\in[q]$, each $K_i$ is the union of at most $s$ $\mathcal C$-polyhedra, each having at most $s^{q-1}$ facets, and $K_1\cap\cdots\cap K_q=\emptyset$.
\end{lemma}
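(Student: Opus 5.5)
We prove Lemma~\ref{lem:AS-sconvex-polytopal-enlargement} by induction on $q$, replacing one $s$-convex set at a time by a union of $\mathcal C$-polyhedra. This mirrors the one-at-a-time separation argument in the proof of Theorem~\ref{thm:colorful-tverberg}. The facet counts grow multiplicatively, which is where the bound $s^{q-1}$ comes from.

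Write each $F_i=\bigcup_{m=1}^{s}C_{i,m}$ with $C_{i,m}\in\mathcal C$, padding with empty sets if $F_i$ has fewer than $s$ pieces. The empty set is the intersection of a halfspace with its complement, or it can simply be dropped. Distributing the intersection gives
\[
F_1\cap\cdots\cap F_q=\bigcup_{(m_1,\ldots,m_q)\in[s]^q}C_{1,m_1}\cap\cdots\cap C_{q,m_q}=\emptyset,
\]
so every one of the $s^q$ ``selection'' intersections of convex sets is empty. Now handle $F_1$. For a fixed piece $C_{1,m}$ and a fixed choice $\mu=(m_2,\ldots,m_q)\in[s]^{q-1}$, the convex set $B_\mu=C_{2,m_2}\cap\cdots\cap C_{q,m_q}$ is disjoint from $C_{1,m}$. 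By the first separation property there is a halfspace $h_{m,\mu}$ with $C_{1,m}\subseteq h_{m,\mu}$ and $h_{m,\mu}\cap B_\mu=\emptyset$. Set $K_{1,m}=\bigcap_{\mu}h_{m,\mu}$. This is a $\mathcal C$-polyhedron with at most $s^{q-1}$ facets, it contains $C_{1,m}$, and it is disjoint from every $B_\mu$. Hence $K_1=\bigcup_m K_{1,m}$ contains $F_1$ and satisfies $K_1\cap F_2\cap\cdots\cap F_q=\emptyset$.

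Doing this naively for every index would blow up the facet counts once $K_1$ has replaced $F_1$ in later steps, since $K_1$ is no longer a union of convex pieces in the original sense. There are two ways around this. The first is to observe that the construction above used only the original sets: for each index $i$ we can separate each piece $C_{i,m}$ from each intersection $\bigcap_{j\ne i}C_{j,m_j}$ of the \emph{original} pieces. This does not obviously keep the final intersection empty, because the separators for different indices are chosen independently. So the second, inductive route is the one to take. After replacing $F_1$ by $K_1$, the new family consists of $K_1=\bigcup_m K_{1,m}$, where each $K_{1,m}$ is convex, together with $F_2,\ldots,F_q$, and its intersection is empty. We then proceed to index $2$, treating the polyhedra $K_{1,m}$ as convex pieces. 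Each of them remains untouched in later steps, since we only enlarge one set at a time and always against the current versions of the others.

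At step $i$, a piece of $F_i$ is separated from each intersection of one piece from each of the other $q-1$ current sets. There are $s^{q-1}$ such intersections, all convex because polyhedra and original pieces are convex, so each new polyhedron has at most $s^{q-1}$ facets. The already-built $K_j$ with $j<i$ contribute pieces but are never enlarged again, so their facet counts stay at most $s^{q-1}$. After $q$ steps every set has been replaced, each $K_i$ is a union of at most $s$ polyhedra with at most $s^{q-1}$ facets, and the emptiness of the total intersection is preserved at every step: $K_i$ misses every selection intersection of the current other sets, and by distributivity it therefore misses their full intersection.

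The main point to verify carefully is exactly this invariant: at each step, each current set is a union of at most $s$ convex pieces, and the intersection of the current family is empty. It is what lets the separation at step $i$ be applied piece-by-piece with only $s^{q-1}$ halfspaces. The case where some piece is empty, so that no separation is needed, is handled by taking $X$ as the polyhedron or by dropping that piece.
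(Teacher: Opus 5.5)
Your proof is correct, but it is organized differently from the paper's. You run the one-at-a-time replacement on the whole unions. At step $i$, each piece $C_{i,m}$ is separated from the $s^{q-1}$ selection intersections of the \emph{current} other sets, which already include the polyhedra $K_{j,m_j}$ for $j<i$. Correctness rests on your invariant that the current family has empty intersection.

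The paper instead decouples the selections. For each $\alpha\in[s]^q$ it runs the single-convex-set replacement from the proof of Theorem~\ref{thm:colorful-tverberg} on the pieces $C_{1,\alpha_1},\ldots,C_{q,\alpha_q}$ alone, which gives halfspaces $H_1^\alpha,\ldots,H_q^\alpha$ with empty intersection. It then sets $K_{i,a}=\bigcap_{\alpha:\alpha_i=a}H_i^\alpha$. Emptiness of $K_1\cap\cdots\cap K_q$ follows by a one-line pigeonhole: a common point lies in some $K_{i,a_i}$ for every $i$, hence in all $H_i^\alpha$ for $\alpha=(a_1,\ldots,a_q)$.

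So the failure of your ``first route'' is repaired in two different ways. The paper couples the separators within each selection and keeps different selections independent. You couple each new separator to everything built so far. Both use $qs^q$ separations and give the same $s^{q-1}$ facet bound. The paper's version uses the convex-set routine as a black box and puts all the bookkeeping into the final pigeonhole. Yours transcribes the single-set argument more literally to unions, at the cost of re-checking convexity of the mixed intersections and the distributivity step at every stage.

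One small caution on your last sentence. If the piece being enlarged is empty, you may drop it or use $\emptyset$, which is itself a halfspace, but not $X$. Using $X$ is legitimate only for a separator $h_{m,\mu}$ when $B_\mu=\emptyset$. In fact no special case is needed, since the separation axiom already applies to $\emptyset\in\mathcal C$.
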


\begin{proof}
Write each $F_i$ as $F_i=C_{i,1}\cup\cdots\cup C_{i,s}$, where the sets $C_{i,a}$ are convex; if necessary, we repeat $\emptyset$ in order to have exactly $s$ sets. Fix a vector $\alpha=(\alpha_1,\ldots,\alpha_q)\in[s]^q$. Since $F_1\cap\cdots\cap F_q=\emptyset$, we have $C_{1,\alpha_1}\cap\cdots\cap C_{q,\alpha_q}=\emptyset$.

We now enlarge the convex sets $C_{1,\alpha_1},\ldots,C_{q,\alpha_q}$ to halfspaces while preserving the empty total intersection. This is done iteratively. At a given step, suppose that all sets except possibly the $i$-th one have already been replaced by convex supersets, and let $B_i$ be the intersection of these other convex sets. Then $B_i$ is convex and disjoint from $C_{i,\alpha_i}$. By the first separation property, there is a halfspace $H_i^\alpha$ containing $C_{i,\alpha_i}$ whose complementary halfspace contains $B_i$. Replacing $C_{i,\alpha_i}$ by $H_i^\alpha$ therefore keeps the total intersection empty. Repeating this for $i=1,\ldots,q$ gives halfspaces $H_1^\alpha,\ldots,H_q^\alpha$ such that $C_{i,\alpha_i}\subseteq H_i^\alpha$ for every $i$, and $H_1^\alpha\cap\cdots\cap H_q^\alpha=\emptyset$.

For $i\in[q]$ and $a\in[s]$, define $K_{i,a}=\bigcap_{\alpha\in[s]^q:\alpha_i=a} H_i^\alpha$. Then $K_{i,a}$ is a $\mathcal C$-polyhedron with at most $s^{q-1}$ facets, and it contains $C_{i,a}$. Put $K_i=K_{i,1}\cup\cdots\cup K_{i,s}$. Then $F_i\subseteq K_i$, and each $K_i$ has the required form.

It remains to check that $K_1\cap\cdots\cap K_q=\emptyset$. If $x$ belonged to this intersection, then for each $i$ there would be an index $a_i\in[s]$ such that $x\in K_{i,a_i}$. Taking $\alpha=(a_1,\ldots,a_q)$, we would get $x\in H_i^\alpha$ for every $i$, contradicting $H_1^\alpha\cap\cdots\cap H_q^\alpha=\emptyset$. Hence $K_1\cap\cdots\cap K_q=\emptyset$.
\end{proof}

\begin{theorem}[Colorful $k$-wise Tverberg theorem for $s$-convex sets]\label{thm:colorful-tverberg-sconvex}
Let $(X,\mathcal C)$ be a separable convexity space with Radon number $D$. Let $s,k,r$ be positive integers with $r\ge 2$ and $1\le k\le r$, and put $v_{D,s,k}=O(Ds^k\log(s^k+1))$. There exists an absolute constant $C>0$ such that the following holds. Let $C_1,\ldots,C_\ell\subseteq X$ be color classes, each of size $r$, where $\ell\ge Ck v_{D,s,k}\log(kv_{D,s,k}r)$. Then there exists a rainbow partition $C_1\cup\cdots\cup C_\ell=R_1\dot\cup\cdots\dot\cup R_r$ such that for every choice of distinct indices $i_1,\ldots,i_k\in[r]$ and every choice of $s$-convex sets $F_{i_1},\ldots,F_{i_k}\subseteq X$ satisfying $R_{i_j}\subseteq F_{i_j}$ for all $j=1,\ldots,k$, one has $F_{i_1}\cap\cdots\cap F_{i_k}\neq\emptyset$.

Equivalently,
\[
O\!\left(kD s^k\log(s^k+1)\log(kDrs^k\log(s^k+1))\right)
\]
color classes, each of size $r$, suffice.
\end{theorem}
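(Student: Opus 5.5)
The plan is to run the argument of Theorem~\ref{thm:colorful-tverberg} with the halfspace hypergraph replaced by an auxiliary hypergraph of unions of polyhedra, on the finite ground set $S=C_1\cup\cdots\cup C_\ell$. Let $H_S=(S,E)$ be the hypergraph from Lemma~\ref{lem:AS-vc-polytopal-unions} with $a=s$ and $t=s^{k-1}$. Its hyperedges are the traces on $S$ of unions of at most $s$ $\mathcal C$-polyhedra, each with at most $s^{k-1}$ facets. Then $L=s^k$, and the lemma gives $\operatorname{VCdim}(H_S)\le v_{D,s,k}=O(Ds^k\log(s^k+1))$. By Theorem~\ref{thm:intro-colorful-vc-bound}, if $\ell\ge Ckv_{D,s,k}\log(kv_{D,s,k}r)$ with $C$ large, the colored set $(C_1,\ldots,C_\ell)$ is not colorfully $(k,r)$-shattered by $H_S$. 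So there is a rainbow partition $R_1,\ldots,R_r$ such that for any distinct $i_1,\ldots,i_k$ and any hyperedges $e_j\supseteq R_{i_j}$ of $H_S$, we have $e_1\cap\cdots\cap e_k\neq\emptyset$.

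Next I suppose, for contradiction, that there are $s$-convex sets $F_{i_j}\supseteq R_{i_j}$ with $F_{i_1}\cap\cdots\cap F_{i_k}=\emptyset$. Lemma~\ref{lem:AS-sconvex-polytopal-enlargement} with $q=k$ enlarges them to sets $K_j\supseteq F_{i_j}$. Each $K_j$ is a union of at most $s$ polyhedra with at most $s^{k-1}$ facets, and $K_1\cap\cdots\cap K_k=\emptyset$. The traces $e_j=K_j\cap S$ are hyperedges of $H_S$ with $R_{i_j}\subseteq e_j$. Their intersection is contained in $K_1\cap\cdots\cap K_k=\emptyset$, which contradicts the choice of the partition.

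The main obstacle is that colorful shattering as defined requires the hyperedges themselves to have empty intersection, while $H_S$ lives on $S$. I would take the hypergraph's edges to be the traces on $S$. Theorem~\ref{thm:intro-colorful-vc-bound} only uses traces on $S$ together with the empty intersection of the traces, via Perles--Sauer--Shelah and the permanent bound. Emptiness of $K_1\cap\cdots\cap K_k$ implies emptiness of the traces, so the contradiction goes through. Alternatively, I can use the ground set $X$ with edges the full unions $K$. Its VC-dimension on any finite set is bounded by Lemma~\ref{lem:AS-vc-polytopal-unions}, and then the definition applies verbatim. Finally, I would check the bound: substituting $v_{D,s,k}$ gives $\ell=O(kDs^k\log(s^k+1)\log(kDrs^k\log(s^k+1)))$, because $\log(k v_{D,s,k} r)$ and $\log(kDrs^k\log(s^k+1))$ agree up to a constant factor.
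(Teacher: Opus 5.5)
Your proposal is correct and follows the paper's proof essentially verbatim. Like the paper, you build the auxiliary hypergraph on $S$ whose edges are traces of unions of at most $s$ $\mathcal C$-polyhedra with at most $s^{k-1}$ facets, bound its VC-dimension via Lemma~\ref{lem:AS-vc-polytopal-unions}, and use Theorem~\ref{thm:intro-colorful-vc-bound} to get a non-shattered rainbow partition. You then reach a contradiction through Lemma~\ref{lem:AS-sconvex-polytopal-enlargement} with $q=k$. Your remark that edges living on $S$ makes empty intersection of edges the same as empty intersection of traces is exactly how the paper sets up the hypergraph.
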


\begin{proof}
Let $S=C_1\cup\cdots\cup C_\ell$. We define an auxiliary hypergraph $H=H_{s,k}(S)$ on the vertex set $S$. A subset $E\subseteq S$ is a hyperedge of $H$ if there exists a set $U\subseteq X$ such that $E=S\cap U$, where $U$ is the union of at most $s$ $\mathcal C$-polyhedra, each having at most $s^{k-1}$ facets.

By Lemma~\ref{lem:AS-vc-polytopal-unions}, applied with $a=s$ and $t=s^{k-1}$, the VC-dimension of $H$ is at most $v_{D,s,k}=O(Ds^k\log(s^k+1))$.

We now apply the colorful shattering bound, Theorem~\ref{thm:intro-colorful-vc-bound}, to this hypergraph. Since $\ell\ge Ck v_{D,s,k}\log(kv_{D,s,k}r)$, the colored set $(C_1,\ldots,C_\ell)$ is not colorfully $(k,r)$-shattered by $H$. Hence there exists a rainbow partition $S=R_1\dot\cup\cdots\dot\cup R_r$ with the following property: for every choice of distinct indices $i_1,\ldots,i_k\in[r]$ and every choice of hyperedges $E_{i_1},\ldots,E_{i_k}$ of $H$ satisfying $R_{i_j}\subseteq E_{i_j}$ for all $j$, we have $E_{i_1}\cap\cdots\cap E_{i_k}\neq\emptyset$.

We claim that this rainbow partition has the desired $s$-convex $k$-wise Tverberg property. Suppose not. Then for some distinct indices $i_1,\ldots,i_k\in[r]$ there exist $s$-convex sets $F_{i_1},\ldots,F_{i_k}$ such that $R_{i_j}\subseteq F_{i_j}$ for all $j=1,\ldots,k$, but $F_{i_1}\cap\cdots\cap F_{i_k}=\emptyset$.

By Lemma~\ref{lem:AS-sconvex-polytopal-enlargement}, applied to the $k$ sets $F_{i_1},\ldots,F_{i_k}$, there exist sets $U_{i_1},\ldots,U_{i_k}$ such that $F_{i_j}\subseteq U_{i_j}$ for every $j$, each $U_{i_j}$ is the union of at most $s$ $\mathcal C$-polyhedra, each having at most $s^{k-1}$ facets, and $U_{i_1}\cap\cdots\cap U_{i_k}=\emptyset$.

For each $j$, set $E_{i_j}=S\cap U_{i_j}$. By the definition of $H$, each $E_{i_j}$ is a hyperedge of $H$, and since $R_{i_j}\subseteq F_{i_j}\subseteq U_{i_j}$, we have $R_{i_j}\subseteq E_{i_j}$. But $E_{i_1}\cap\cdots\cap E_{i_k}\subseteq U_{i_1}\cap\cdots\cap U_{i_k}=\emptyset$, contradicting the choice of the rainbow partition. Therefore no such $s$-convex sets exist, and the theorem follows.
\end{proof}

Taking $k=r$ gives the ordinary colored Tverberg-type statement for $s$-convex sets.

\medskip

\noindent \textbf{Theorem \ref{thm:intro-colorful-tverberg-sconvex} -- restatement: Colorful Tverberg theorem for $s$-convex sets.}
Let $(X,\mathcal C)$ be a separable convexity space with Radon number $D$, and let $s$ and $r\ge 2$ be integers. There is an absolute constant $C>0$ such that the following holds. Let $C_1,\ldots,C_\ell\subseteq X$ be color classes, each of size $r$, where
\[
\ell\ge C rD s^r\log(s^r+1)\log\!\left(r^2D s^r\log(s^r+1)\right).
\]
Then there exists a rainbow partition $C_1\cup\cdots\cup C_\ell=R_1\dot\cup\cdots\dot\cup R_r$ such that for every choice of $s$-convex sets $F_1,\ldots,F_r\subseteq X$ satisfying $R_i\subseteq F_i$ for every $i\in[r]$, one has $F_1\cap\cdots\cap F_r\neq\emptyset$.

\begin{proof}
This is Theorem~\ref{thm:colorful-tverberg-sconvex} with $k=r$.
\end{proof}

\section{A remark on weak separability}\label{sec:weak-separability}

In the main body of the paper, the term separable includes both separation properties stated in the introduction. In this section we drop the first requirement, concerning two disjoint convex sets, and retain only the point--convex-set separation property used by Holmsen and Pat\'akov\'a~\cite{HolmsenP24}: we call a convexity space weakly separable if for every convex set $C\in\mathcal C$ and every point $x\notin C$, there is a halfspace $H$ such that $C\subseteq H$ and $x\notin H$.

In this section we briefly explain how the arguments of the paper can be adapted under this weaker assumption, at the cost of slightly worse dependence on the Radon number. We shall also assume the following compactness property for finitely generated convex sets: for every finitely generated convex set $B=\operatorname{conv}(T)$ and every family of halfspaces $\{H_i\}_{i\in I}$, if $B\cap\bigcap_{i\in I}H_i=\emptyset$, then there exists a finite subfamily $I_0\subseteq I$ such that $B\cap\bigcap_{i\in I_0}H_i=\emptyset$.

Let $D$ be the Radon number, and let $h$ be the Helly number of the space (not to be confused with $h$ from the previous sections that was the fractional Helly number). By Levi's theorem, $h\le D-1$. Under weak separability, the halfspace hypergraph still satisfies $\operatorname{VCdim}(H_{\mathcal C})\le D-1$: if a finite set is shattered by halfspaces, then no partition of it can be a Radon partition. However, the reverse inequality need not hold, and, more importantly, disjoint convex hulls are no longer separated directly by halfspaces.

The replacement is the following simple lemma. Let $S$ be a finite set of points, let $B=\operatorname{conv}(S)$, and let $C_1,\ldots,C_q$ be finitely generated convex sets contained in $B$. Suppose that $C_1\cap\cdots\cap C_q=\emptyset$. For every $y\in B$, choose an index $i(y)$ with $y\notin C_{i(y)}$. By weak separability there is a halfspace $H_y$ such that $C_{i(y)}\subseteq H_y$ and $y\notin H_y$. Hence $B\cap\bigcap_{y\in B}H_y=\emptyset$. By the compactness assumption, there are $y_1,\ldots,y_N\in B$ such that $B\cap H_{y_1}\cap\cdots\cap H_{y_N}=\emptyset$. Applying the Helly theorem to the finite family $B,H_{y_1},\ldots,H_{y_N}$, we may choose at most $h$ of the halfspaces, say $H_{y_j}$ with $j\in J$, such that $B\cap\bigcap_{j\in J}H_{y_j}=\emptyset$. For each $i\in[q]$, let $K_i$ be the intersection of those $H_{y_j}$ with $i(y_j)=i$, and let $K_i=X$ if no such halfspace was chosen. Then $C_i\subseteq K_i$ for every $i$, each $K_i$ is an intersection of at most $h\le D-1$ halfspaces, and $S\cap K_1\cap\cdots\cap K_q=\emptyset$.

Thus, in all shattering arguments one should replace the halfspace hypergraph $H_{\mathcal C}$ by the hypergraph $H_{\mathcal C}^{(\le h)}$ whose hyperedges are intersections of at most $h$ halfspaces. The previous lemma provides exactly the certificates needed for colorful shattering: whenever the relevant convex hulls fail to intersect, one obtains hyperedges of $H_{\mathcal C}^{(\le h)}$ containing the corresponding parts and whose intersection misses the finite point set under consideration.

It remains to estimate the VC-dimension of $H_{\mathcal C}^{(\le h)}$. Since the halfspace hypergraph has VC-dimension at most $D-1$, the number of halfspace traces on an $m$-point set is at most $O(m^{D-1})$ by Perles--Sauer--Shelah. Therefore the number of traces of intersections of at most $h$ halfspaces is at most $O(m^{(D-1)h})$, and the standard comparison with $2^m$ gives $\operatorname{VCdim}(H_{\mathcal C}^{(\le h)})=O(D^2\log D)$. Write $v_D=O(D^2\log D)$ for this new VC parameter.

With this replacement, the proofs of the Tverberg, selection, weak $\varepsilon$-net, and quantitative $(p,q)$ results go through verbatim, with $D$ in the shattering estimates replaced by $v_D$. Consequently, the colorful $k$-wise Tverberg theorem becomes $O(kv_D\log(kv_Dr))$ color classes, each of size $r$. Equivalently, this is $O(kD^2\log D\log(kD^2r\log D))$ colors. Taking $k=D-1$ and applying Levi's theorem gives a colorful Tverberg theorem with $O(D^3\log D\log r)$ color classes of size $r$, for $r>D$.

The uncolored $k$-wise Tverberg theorem then gives a bound of $O(kv_Dr\log(kv_Dr))$ points. In particular, the ordinary Tverberg number satisfies $\operatorname{Tv}_{\mathcal C}(r)\le O(D^3r\log D\log r)$ for $r>D$. Thus one still obtains a bound essentially linear in $r$, but with an additional factor of about $D\log D$ in the dependence on the Radon number.

The colorful selection argument also remains valid. The fractional Helly parameter is unchanged, provided one uses the fractional Helly theorem for the weak notion of separability: we may still take $d=2^D$ and $h_{\mathrm{frac}}=d+1$. Applying the colorful Tverberg theorem with $r=h_{\mathrm{frac}}$ and $k=D-1$ now gives $O(Dv_D\log(Dv_D2^D))=O(D^4\log D)$ colors. Hence the colorful selection lemma and the ordinary selection lemma hold with subset size $a=O(D^4\log D)$.

As a consequence, the weak $\varepsilon$-net theorem holds in the weakly separable setting with weak nets of size $O_D(\varepsilon^{-a})$, where $a=O(D^4\log D)$. The quantitative $(p,q)$-theorem also remains valid with the same change in the exponent: the bound is the same as in Theorem~\ref{thm:pq-separable}, but with $a=O(D^4\log D)$. Finally, the second colored Tverberg tradeoff becomes the following: there are $\ell_0=O(D^4\log D)$ color classes such that, if each color class has size at least $C_Dr$, then one can find $r$ pairwise disjoint rainbow sets whose convex hulls have a common point.

\section*{Acknowledgements}

\paragraph{Use of generative AI.}
The core mathematical ideas, results, and proofs in this paper are due to the authors. The authors used ChatGPT as an auxiliary tool in discussions of computational and technical details and in editing parts of the exposition. 

\bibliographystyle{alpha}
\bibliography{references-isf}

\end{document}